\begin{document}
\theoremstyle{plain}
\newtheorem{thm}{Theorem}
\newtheorem{prop}{Proposition}[section]
\newtheorem{lem}[prop]{Lemma}
\newtheorem{clry}[prop]{Corollary}
\newtheorem{deft}[prop]{Definition}
\newtheorem{hyp}{Assumption}

\theoremstyle{definition}
\newtheorem{rem}[prop]{Remark}
\numberwithin{equation}{section}
\newcommand{\R}{\mathbb{R}}
\newcommand{\Z}{\mathbb{Z}}
\newcommand{\N}{\mathbb{N}}
\newcommand{\C}{\mathbb{C}}
\def\virgp{\raise 2pt\hbox{,}}

\let\et=\eta
\let\al=\alpha
\let\b=\beta
\let\g=\gamma
\let\de=\delta
\let\eps=\varepsilon
\let\k=\kappa
\let\z=\zeta
\let\lam=\lambda
\let\r=\rho
\let\s=\sigma
\let\f=\phi
\let\vf=\varphi
\let\p=\psi
\let\om=\omega
\let\G= \Gamma
\let\D=\Delta
\let\Lam=\Lambda
\let\S=\Sigma
\let\Om=\Omega
\let\wt=\widetilde
\let\wh=\widehat
\let\convf=\leftharpoonup
\let\tri\triangle

\def\cA{{\mathcal A}}
\def\cB{{\mathcal B}}
\def\cC{{\mathcal C}}
\def\cD{{\mathcal D}}
\def\cE{{\mathcal E}}
\def\cF{{\mathcal F}}
\def\cG{{\mathcal G}}
\def\cH{{\mathcal H}}
\def\cI{{\mathcal I}}
\def\cJ{{\mathcal J}}
\def\cK{{\mathcal K}}
\def\cL{{\mathcal L}}
\def\cM{{\mathcal M}}
\def\cN{{\mathcal N}}
\def\cO{{\mathcal O}}
\def\cP{{\mathcal P}}
\def\cQ{{\mathcal Q}}
\def\cR{{\mathcal R}}
\def\cS{{\mathcal S}}
\def\cT{{\mathcal T}}
\def\cU{{\mathcal U}}
\def\cV{{\mathcal V}}
\def\cW{{\mathcal W}}
\def\cX{{\mathcal X}}
\def\cY{{\mathcal Y}}
\def\cZ{{\mathcal Z}}

\renewcommand{\phi}{\varphi}
\renewcommand{\d}{\partial}
\newcommand{\dd}{\mathrm{d}}
\newcommand{\e}{\mathrm{e}}
\newcommand{\re}{\mathop{\rm Re} }
\newcommand{\im}{\mathop{\rm Im}}
\newcommand{\CO}{\mathcal{C}^{\infty}_0}
\renewcommand{\O}{\mathcal{O}}
\newcommand{\supp}{\mathop{\rm supp}}
\newcommand{\sech}{\mathop{\rm sech}}
\newcommand{\tr}{\mathop{\rm Tr}}
\newcommand{\grad}{\mathop{\rm grad}\nolimits}
\newcommand{\vol}{\mathop{\rm vol}\nolimits}
\newcommand{\res}{\mathop{\rm Res}\nolimits}
\newcommand{\id}{\mathop{\rm Id}}
\newcommand{\DN}{\mathop{\mathcal{N}}\nolimits}
%
%
%

\title[]{Stability estimates for an inverse scattering problem at high frequencies}
\date{\today}

\author[Ammari]{Habib Ammari}
\address{Department of Mathematics and Applications,
{\'E}cole Normale Sup{\'e}rieure, 45 Rue d'Ulm, F-75005 Paris, France}
\email{habib.ammari@ens.fr}
\thanks{This work was supported by the ERC Advanced Grant Project
MULTIMOD--267184 and the Institut Universitaire de France.}

\author[Bahouri]{Hajer Bahouri}
\address{Universit{\'e} Paris Est-Cr\'eteil, Umr 8050 Lama, 61 avenue du G\'en\'eral de Gaulle, F-94010 Cr\'eteil Cedex, France}
\email{hbahouri@math.cnrs.fr}

\author[Dos Santos Ferreira]{David Dos Santos Ferreira}
\address{Universit{\'e} Paris 13, Cnrs, Umr 7539 Laga, 99 avenue Jean-Baptiste Cl\'ement, F-93430 Villetaneuse, France}
\email{ddsf@math.univ-paris13.fr}

\author[Gallagher]{Isabelle Gallagher}
\address{Institut de Math{\'e}matiques Umr 7586, Universit{\'e} Paris VII, 175 rue du Chevaleret, F-75013 Paris, France}
\email{Isabelle.Gallagher@math.jussieu.fr}

\subjclass[2000]{35R30, 35B30}

\keywords{Inverse scattering problem, Lipschitz stability,
resolution}

\begin{abstract}
We consider an inverse scattering problem and its near-field approximation at
   high frequencies. We first prove, for both problems, Lipschitz stability results
   for determining the low-frequency component of the potential.
   Then we show that, in the case of a radial potential supported sufficiently near the
   boundary, infinite resolution can be achieved from measurements of the
   near-field operator in the monotone case.
\end{abstract}
\maketitle
\setcounter{tocdepth}{1}
\tableofcontents
%
%
\section*{Introduction}
%


The first aim of this paper is to establish Lipschitz stability
results for the inverse scattering problem of determining the
low-frequency component (lower than the operating frequency) of
the compactly supported potential from scattering or near-field
measurements. It is known that, in general, the problem is
exponentially unstable \cite{A1,A2,A, J, mandache}. However,
taking advantage of a priori information may improve stability and
give accurate reconstruction algorithms \cite{lnm1,AK, mathcomp2}.
The Lipschitz stability results proved in this paper together with
the recent analysis of the local convergence of the nonlinear
Landweber iteration in \cite{otmar} show that the low-frequency
component of the potential can be determined from the data in a
linearly stable way. Moreover, they precisely quantify the
resolution limit, which is defined as the characteristic size of
the smallest oscillations in the potential that can be stably
recovered from the data. Since Rayleigh's work, it has been
admitted that the resolution limit in inverse scattering is of
order $\pi$ over the operating frequency \cite{born}. This is
nothing else than a direct application of the uncertainty
principle in inverse scattering \cite{bertero, chen, donoho,
slepian83}. It is well-known that if the support of the potential
is a point support, then the reconstructed location of the point
potential from the scattering data has finite size of order of the
Rayleigh resolution limit \cite{bookimaging, bertero}. Having this
in mind, the results of this paper prove that the Fourier
transform of the potential can be reconstructed in a linearly
 stable way for all frequencies (dual variable to the
space one)  smaller than the operating frequency, and therefore,
justify the notion of resolution limit. More intriguingly, again
in view of \cite{otmar},  they prove that the stability of the
reconstruction of the potential increases at high operating
frequencies.
\smallskip
\noindent

The second aim of the paper is to show that infinite resolution
can be achieved from near-field measurements. Here, the near-field
operator approximates Sommerfeld's radiation condition and is
equivalent to the measurements of the Cauchy data at a finite
distance. Moreover, if the potential is supported near the
boundary, then infinite resolution can be achieved in the monotone
case. In fact, a Lipschitz stability result holds  for both the
low and high frequency components of the potential. It should be
noted that the scattering amplitude can be recovered from the
near-field operator. However, approximating the near-field
operator from the scattering amplitude is a severely ill-posed
problem \cite{karp,No1,No2,T2} and therefore cannot be of any
practical and realistic use. It was shown in \cite{karp} that in
order to compute the near-field operator from the scattering
amplitude one needs to differentiate the scattering amplitude an
infinite number of times.

\smallskip
\noindent The results of this paper extend to medium scattering
the recent results in~\cite{SIIMS, AGS1, zhao}, where a stability
and resolution analysis was performed for linearized conductivity
and wave imaging problems. They can be also used to justify the
hopping (or continuation in the frequency) reconstruction
algorithms proposed in \cite{bao1, chen, CR, Coifman}.

\smallskip
\noindent In connection with our results in this paper, we   also
refer to the works by Isakov~\cite{victor} and Isakov and
Kindermann \cite{victor2}, Bao, Lin, and Triki \cite{bao2,
bao2num}, Nagayasu, Uhlmann and Wang~\cite{gunther, gunther2}, as
well as Derveaux, Papanicolaou, and Tsogka~\cite{DPT}.
In~\cite{victor,victor2}, an evidence of increasing stability in
wave imaging when frequency is growing was given. In \cite{bao2},
stability estimates for the inverse source problem were
established and the conversion of the logarithmic type stability
to a Lispchitz one first proved. Numerical results to illustrate
the stability of the source reconstruction problem were presented
in \cite{bao2num}. In \cite{bao3},  Lipschitz stability estimates
for the time-dependent wave equation were obtained.
In~\cite{gunther}, a stability estimate for a linearized
conductivity problem was derived and its dependence on the depth
of the inclusion highlighted.  In \cite{gunther2}, it is shown
that the ill-posedness of the inverse acoustic problem decreases
when the frequency increases and the stability estimate changes
from logarithmic type for low frequencies to a Lipschitz type for
large frequencies. In~\cite{DPT}, the enhancement of resolution in
the near-field was studied and numerically illustrated. Our
results in this paper confirm these important observations in a
quite general situation and precisely quantify them.

\medskip
\noindent Our paper is organized as follows. Section \ref{sect2}
is devoted to the stability of the reconstruction of the potential
from the scattering amplitude (called also far-field pattern) in
the high frequency regime. Theorem~\ref{genthm} proves that the
low-frequency component of the potential can be determined in a
stable way from the scattering amplitude. The threshold frequency
determines the resolution limit. Section \ref{sect3} extends the
results of Section \ref{sect2} to the near-field measurements.
Theorem~\ref{genthm2} shows that the same results as those in
Section \ref{sect2} hold for reconstructing the potential from
measurements of the near-field operator. In Section
\ref{nearboundary} we show that we gain infinite resolution for
potentials supported near the boundary. If the potential is
supported near the boundary, then infinite resolution can be
achieved in the monotone case. Theorem~\ref{disk} provides a
Lipschitz stability result for both the low and high frequency
components of the potential. Finally, in Appendix
\ref{App:Special}, we provide useful results on Bessel's
functions.

\bigskip
\noindent
Finally, we mention that the letter~$C$ will be used to denote a universal constant
which may vary from line to line. We also use~$A\lesssim B$  to
denote an estimate of the form~$A\leq C B$  for some
constant~$C$.
We also use the classical notation~$\langle x \rangle = \sqrt{1+|x|^2}$.

\bigskip
\noindent
{\bf Acknowledgements. } We are very grateful to J. Sj\"ostrand for taking
the time to discuss  some properties of the Laplace transform with us.

%
%
\begin{section}{Far field pattern} \label{sect2}

\subsection{Definitions and notations}

Let $\Omega$ be a bounded domain in the Euclidean space $\R^d$ of dimension $d \ge 2$, let $q \in \CO(\R^d)$
be a  real-valued potential supported in $\Omega$. We use the classical notation $D=-i\d$ for derivatives and consider
the Helmholtz equation with potential
\begin{align}
\label{Far:HelmholtzPoten}
     D^2u-\lambda^2u+qu = 0
\end{align}
at frequency $\lambda \in \R^*_+:= \R_+ \setminus\{0\}$. Plane
waves $\e^{i \lambda x \cdot \omega}$ propagating along the
direction~$\omega $ in~$ S^{d-1}$ are solutions of the free
Helmholtz equation
\begin{align}
\label{Far:FreeHelmholtz}
     D^2u-\lambda^2u = 0.
\end{align}
Here $ S^{d-1}$ denotes the unit sphere in $\R^d$. More generally,
plane waves generate the set of solutions in  the space of
tempered distributions, $\cS'(\R^d)$, of the free Helmholtz
equation: all solutions of \eqref{Far:FreeHelmholtz} with
polynomial growth are superpositions of elementary plane
waves~$\e^{i \lambda x \cdot \omega}$ when~$\omega $ varies on the
sphere~$ S^{d-1}$. It will be useful later on to adopt Melrose's
notation in \cite{M} to designate these solutions: for~$g \in
\mathcal{C}^\infty(S^{d-1})$, we shall write
\begin{align*}
      \Phi_0(\lam) g (x) = \int_{S^{d-1}} {\rm e} ^{i \lambda x \cdot \omega} g(\omega) \, \dd\omega.
\end{align*}
Obviously $\Phi_0(\lam) g$ is a solution of \eqref{Far:FreeHelmholtz} which belongs to~$\cS'(\mathbb{R}^d)$.

\medskip
\noindent
To guarantee the uniqueness of solutions to the Helmholtz equation \eqref{Far:HelmholtzPoten}, one can impose conditions on the behavior of solutions at
infinity. More precisely, we are interested in solutions which can be decomposed
\begin{align}
\label{Far:Scattering}
     u = \e^{i \lambda x \cdot \omega} + u^{\rm scat} =  u^{\rm in} +  u^{\rm scat}
\end{align}
as the sum of an incoming planar wave and a scattered wave satisfying Sommerfeld's radiation condition
\begin{align}
\label{Far:Sommerfeld}
    \bigg| \bigg(\frac{\partial}{\partial |x|} - i \lambda\bigg)u^{\rm scat}\bigg| =
    o\left(\frac{1}{|x|^{\frac{d-1}{2}}}\right)  \quad \text{ as } |x| \rightarrow +\infty,
\end{align}
uniformly with respect to the direction $\theta = \frac{x}{|x|}$
at fixed frequency $\lambda \in \R^*_+$. The following result from
\cite[Lemma 2.4]{M} holds.

\begin{prop}
\label{Far:ExistenceThm}
     There exists a unique solution to the Helmholtz equation~\eqref{Far:HelmholtzPoten} with potential $q \in \CO(\Omega)$
     of the form
     \begin{equation}\label{decphiq}
       \phi_q(x,\omega,\lambda) = \e^{i \lambda x \cdot \omega} + \phi_q^{\rm
       scat}(x,\omega,\lambda),
     \end{equation}
     where the scattered wave $\phi_q^{\rm scat}$ satisfies Sommerfeld's radiation condition~\eqref{Far:Sommerfeld}
     and which is given by
         \begin{equation}\label{resscat} \phi_q^{\rm scat} = -R_q(\lambda)(\e^{i \lambda x \cdot \omega}q).     \end{equation}
     Here $R_q$ denotes the meromorphic continuation of the perturbed resolvent.
     Furthermore $  \phi_q$ depends smoothly on $(x,\omega,\lambda) \in \R^d \times S^{d-1} \times \R^*_+$ and is bounded.
\end{prop}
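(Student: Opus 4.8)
The plan is to reformulate \eqref{Far:HelmholtzPoten} as an integral equation for the scattered field and then combine the meromorphic continuation of the free resolvent with analytic Fredholm theory. Writing $D^2 = -\Delta$ and subtracting the free equation \eqref{Far:FreeHelmholtz} satisfied by $u^{\rm in} = \e^{i\lambda x\cdot\omega}$, one checks that $\phi_q$ has the form \eqref{decphiq} and solves \eqref{Far:HelmholtzPoten} precisely when its scattered part obeys
\[(-\Delta - \lambda^2)\phi_q^{\rm scat} = -q\,\phi_q = -q\big(\e^{i\lambda x\cdot\omega} + \phi_q^{\rm scat}\big)\]
together with Sommerfeld's condition \eqref{Far:Sommerfeld}. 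Applying the free outgoing resolvent $R_0(\lambda) = (-\Delta - \lambda^2 - i0)^{-1}$, whose kernel is the outgoing Green's function, turns this into the Lippmann--Schwinger equation
\[\big(I + R_0(\lambda)\,q\big)\phi_q^{\rm scat} = -R_0(\lambda)\big(q\,\e^{i\lambda x\cdot\omega}\big).\]

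First I would recall that $R_0(\lambda)$, initially holomorphic and $L^2(\R^d)$-bounded for $\im \lambda > 0$, extends to a meromorphic family from compactly supported functions to $H^2_{\rm loc}(\R^d)$; by the limiting absorption principle it is in particular bounded from $L^2_s$ into $L^2_{-s}$ for every $s > 1/2$ and every $\lambda \in \R^*_+$. Since $q \in \CO(\Omega)$, multiplication by $q$ carries $L^2_{-s}$ into $L^2_s$ with compactly supported image, so the composition $K(\lambda) := R_0(\lambda)\,q$ maps $L^2_{-s}$ into $H^2_{\rm loc}$ and, by Rellich's compactness theorem, is a compact operator depending holomorphically on $\lambda$ along the continued family near $\R^*_+$. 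The analytic Fredholm theorem then produces $(I + K(\lambda))^{-1}$ as a meromorphic operator family whose poles form a discrete set.

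The crucial step is to exclude poles on the positive real axis. If $(I + K(\lambda))v = 0$ for some $\lambda \in \R^*_+$, then $v = -R_0(\lambda)(qv)$ is an outgoing solution of $(-\Delta - \lambda^2 + q)v = 0$; since $q$ is real-valued and compactly supported, Rellich's uniqueness theorem combined with unique continuation forces $v = 0$, i.e. there are no embedded eigenvalues nor outgoing resonances at real energies. This is where I expect the main obstacle to lie, as it is the analytic core of the existence and uniqueness claim. It follows that $(I + R_0(\lambda)q)^{-1}$ is bounded for every $\lambda \in \R^*_+$, that the Lippmann--Schwinger equation has the unique solution $\phi_q^{\rm scat} = -(I + R_0(\lambda)q)^{-1}R_0(\lambda)(q\,\e^{i\lambda x\cdot\omega})$, and that, setting $R_q(\lambda) = (I + R_0(\lambda)q)^{-1}R_0(\lambda)$ (which agrees with the resolvent $(-\Delta + q - \lambda^2)^{-1}$ for $\im \lambda > 0$ by the second resolvent identity and thus provides its meromorphic continuation), formula \eqref{resscat} holds.

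It remains to collect the regularity statements. Uniqueness of the full solution follows from the same Rellich argument applied to the difference of two solutions of the form \eqref{decphiq}, which carries no incoming wave and is therefore zero. Smoothness in $x$ is elliptic regularity for the smooth coefficient $q$; smoothness in $\omega$ is inherited from the smooth dependence of the source $q\,\e^{i\lambda x\cdot\omega}$ on $\omega$; and the holomorphy of $R_q(\lambda)$ away from its poles yields smooth (indeed analytic) dependence on $\lambda \in \R^*_+$. Finally, boundedness of $\phi_q$ follows from $|\e^{i\lambda x\cdot\omega}| = 1$ together with the decay $|\phi_q^{\rm scat}(x)| = O\big(|x|^{-(d-1)/2}\big)$ read off from the outgoing Green's function asymptotics.
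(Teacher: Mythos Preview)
The paper does not actually give a proof of this proposition; it simply quotes it from Melrose's monograph \cite[Lemma~2.4]{M}. Your outline --- reduction to the Lippmann--Schwinger equation, compactness of $R_0(\lambda)q$ via the compact support of $q$, analytic Fredholm theory, and exclusion of real poles by Rellich's uniqueness theorem plus unique continuation --- is the standard route and is correct. It is in fact essentially how the result is established in Melrose's book and in most scattering-theory references, so your argument is consistent with what the paper is invoking.

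One minor comment: in your last paragraph you derive boundedness of $\phi_q^{\rm scat}$ from the $|x|^{-(d-1)/2}$ decay at infinity; to be complete you should also note that $\phi_q^{\rm scat}$ is locally bounded (which follows from the $H^2_{\rm loc}$ regularity and Sobolev embedding, or from elliptic regularity since $q$ is smooth). This is implicit in your elliptic regularity remark, but worth making explicit.
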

\noindent We choose to denote
      $$ \Phi_q(\lambda)g = \int_{S^{d-1}} \phi_q(x,\omega,\lambda) g(\omega) \, \dd \omega $$
the operator with kernel $\phi_q(x,\omega,\lambda)$ given by
(\ref{decphiq}).
\begin{thm}
\label{Far:ThmScat}
     The scattered wave in the solution \eqref{Far:Scattering} to the Helmholtz equation \eqref{Far:HelmholtzPoten}
     given by Proposition~{\rm\ref{Far:ExistenceThm}} assumes the form
     \begin{align}
          \phi_q^{\rm scat}(x) = \frac{\e^{i\lambda |x|}}{|x|^{\frac{d-1}{2}}} \, a_q\bigg(\frac{x}{|x|},\omega,\lambda\bigg) +
          \O\bigg(\frac{1}{|x|^{\frac{d+1}{2}}}\bigg) \quad \mbox{as } |x| \rightarrow +\infty,
     \end{align}
     where $a_q$ is a smooth function on $S^{d-1} \times S^{d-1} \times \R^*_+$ and
            $$ a_q\bigg(\frac{x}{|x|},\omega,\lambda\bigg) = -\frac{1}{2i \lambda} \bigg(\frac{\lambda}{2\pi i}\bigg)^{\frac{d-1}{2}}
                \int q(y) \phi_q(y,\omega,\lambda)  \e^{ - i \lambda \frac{x } {| x|} \cdot y}  \, \dd y. $$

    \end{thm}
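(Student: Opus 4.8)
The plan is to convert the operator identity \eqref{resscat} into the convolution representation furnished by the Lippmann--Schwinger equation, and then to read off the asymptotics directly from the explicit free Green's function. Since $\phi_q^{\rm scat}=\phi_q-\e^{i\lambda x\cdot\omega}$ solves $(D^2-\lambda^2)\phi_q^{\rm scat}=-q\,\phi_q$ with $\phi_q^{\rm scat}$ satisfying \eqref{Far:Sommerfeld}, inverting the free operator by its outgoing resolvent $R_0(\lambda)=(D^2-\lambda^2)^{-1}$ gives
\begin{align*}
   \phi_q^{\rm scat}(x)=-\int_{\R^d} G_0(x-y,\lambda)\,q(y)\,\phi_q(y,\omega,\lambda)\,\dd y,
\end{align*}
where $G_0(\cdot,\lambda)$ is the Schwartz kernel of $R_0(\lambda)$; this is equivalent to \eqref{resscat} through the resolvent identity. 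Because $q\in\CO(\Omega)$ the integration variable $y$ stays in a fixed compact set, so the entire problem reduces to the large-$|x|$ behaviour of $G_0(x-y,\lambda)$ for $y$ bounded.

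First I would recall the explicit kernel
\begin{align*}
   G_0(z,\lambda)=\frac{i}{4}\Big(\frac{\lambda}{2\pi|z|}\Big)^{\frac{d-2}{2}}H^{(1)}_{\frac{d-2}{2}}(\lambda|z|),
\end{align*}
together with the large-argument expansion $H^{(1)}_\nu(t)=\sqrt{2/(\pi t)}\,\e^{i(t-\nu\pi/2-\pi/4)}\big(1+\O(t^{-1})\big)$ (see Appendix \ref{App:Special}). Writing $\theta=x/|x|$ and using $|x-y|=|x|-\theta\cdot y+\O(|x|^{-1})$ for $y$ in the support of $q$, the phase splits as $\e^{i\lambda|x-y|}=\e^{i\lambda|x|}\e^{-i\lambda\theta\cdot y}\big(1+\O(|x|^{-1})\big)$, while the amplitude contributes $|x-y|^{-\frac{d-1}{2}}=|x|^{-\frac{d-1}{2}}\big(1+\O(|x|^{-1})\big)$. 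Hence
\begin{align*}
   G_0(x-y,\lambda)=c_d(\lambda)\,\frac{\e^{i\lambda|x|}}{|x|^{\frac{d-1}{2}}}\,\e^{-i\lambda\theta\cdot y}+\O\Big(\frac{1}{|x|^{\frac{d+1}{2}}}\Big),
\end{align*}
with a constant $c_d(\lambda)$ obtained by collecting the powers of $\lambda$, the factor $i/4$, the $\sqrt{2/\pi}$ and the phase $\e^{-i(d-1)\pi/4}$; a short computation then matches $-c_d(\lambda)$ with the prefactor $-\frac1{2i\lambda}(\frac{\lambda}{2\pi i})^{\frac{d-1}{2}}$ appearing in the statement.

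Plugging this expansion under the integral sign and factoring out $\e^{i\lambda|x|}/|x|^{\frac{d-1}{2}}$ yields exactly the claimed formula for $a_q(\theta,\omega,\lambda)$, the remaining integral being absolutely convergent because $q\,\phi_q$ is a compactly supported bounded function. Smoothness of $a_q$ on $S^{d-1}\times S^{d-1}\times\R^*_+$ then follows by differentiating under the integral sign, using the smooth dependence of $\phi_q$ on $(y,\omega,\lambda)$ granted by Proposition \ref{Far:ExistenceThm} and the fact that $\theta\mapsto\e^{-i\lambda\theta\cdot y}$ is smooth.

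The main obstacle is not the leading-order computation but the uniform control of the two remainders. I would need the Hankel expansion to be uniform for $\lambda|x-y|$ large, and the geometric expansions of $|x-y|$ and of $|x-y|^{-\frac{d-1}{2}}$ to be uniform in $\theta\in S^{d-1}$ and in $y$ over the compact support of $q$; only then can one integrate the error and conclude that it is genuinely $\O(|x|^{-\frac{d+1}{2}})$ rather than merely $o(|x|^{-\frac{d-1}{2}})$. The careful bookkeeping of the various factors of $i$ and of the $\pi/4$ phases in matching $c_d(\lambda)$ to the stated constant is the other delicate point.
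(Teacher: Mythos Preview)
Your proposal is correct and follows essentially the same route as the paper: both arguments reduce to the Lippmann--Schwinger representation $\phi_q^{\rm scat}(x)=-\int G_\lambda(x,y)\,q(y)\,\phi_q(y)\,\dd y$ with the free outgoing Green function, and then read off the asymptotics from the large-argument Hankel expansion combined with $|x-y|=|x|-\theta\cdot y+\O(|x|^{-1})$. The only difference is how that integral representation is reached: you invoke the outgoing free resolvent directly (equivalently, the resolvent identity applied to \eqref{resscat}), whereas the paper derives it by applying Green's formula on a ball of radius $R$, subtracting the analogous identity for the incident plane wave, and using Sommerfeld's condition to kill the boundary terms as $R\to\infty$.
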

\begin{proof}
     We consider the Green function $G_{\lambda}$ corresponding to the free Helmholtz equation
     \begin{align}
          (D_y^2-\lambda^2)G_{\lambda}(x,y) = \delta(x-y),
     \end{align}
subject to Sommerfeld's radiation condition
(\ref{Far:Sommerfeld}), with $\delta$ being the Dirac delta
function.

\noindent
     Let $R>0$ be large enough so that the ball of radius $R$ contains the support of $q$.
     By definition of the Green function, for all $|x| \leq R$ we have
         $$ u (x) = \int_{|y| \leq R} (D^2_y  - \lambda^2) G_\lambda(x,y) u (y) \, \dd y,$$
     and if $u$ is a solution  to the Helmholtz equation \eqref{Far:HelmholtzPoten} we deduce by Green's formula that for any $|x| \leq R$
     \begin{align*}
           u(x) &= - \int_{|y| \leq R} G_\lambda(x,y) q(y) u (y) \, \dd y \\
           &\quad - \int_{|y|=R} \left( \frac{ \partial G_\lambda}{\partial r}(x,y) \,u (y) - G_\lambda(x,y) \frac{ \partial u}{\partial r}(y)\right)
           \, \dd \sigma (y).
     \end{align*}
     Along the same lines, it is possible to derive a similar identity for the plane wave~$u^{\rm in}(x)=  \e^{i \lambda \omega \cdot x}$
     \begin{align*}
           u^{\rm in}(x)= -\int_{|y|=R} \left( \frac{ \partial G_\lambda}{\partial r}(x,y) \,u^{\rm in}(y) - G_\lambda(x,y)
           \frac{ \partial u^{in}(y)}{\partial r}(y)\right)\, \dd \sigma (y),
     \end{align*}
     taking into account the fact that  $(D^2  - \lambda^2) u^{\rm in} = 0$.
     Subtracting the two identities gives the following representation formula for the scattered wave $u^{\rm scat}=u-u^{\rm in}$
     \begin{align*}
          u^{\rm scat}(x) &= - \int_{|y| \le R} G_\lambda(x,y) \,q(y)\,u (y) \, \dd y \\
          &\quad- \int_{|y|=R} \left( \frac{ \partial G_\lambda}{\partial r}(x,y) \,u^{\rm scat} (y) -
          G_\lambda(x,y) \frac{ \partial u^{\rm scat}}{\partial r}(y)\right) \,\dd\sigma (y).
     \end{align*}
     The Green function of the free Helmholtz equation is explicitly given by
     \begin{align}
           G_{\lambda}(x,y) = \frac{1}{4i}\bigg(\frac{\lambda}{2\pi}\bigg)^{\frac{d-2}{2}} |x-y|^{-\frac{d-2}{2}}
           H_{d/2-1}^{(1)}\big(\lambda |x-y|\big),
     \end{align}
     where $H_{d/2-1}^{(1)}$ is the Hankel function of first kind and order $d/2-1$ (cf.~Appendix \ref{App:Special}).  The asymptotic behavior of Hankel functions \eqref{hankel2} implies that for $|y|$ large enough
     \begin{align}
           G_{\lambda}(x,y) = \frac{1}{2i \lambda} \bigg(\frac{\lambda}{2\pi i}\bigg)^{\frac{d-1}{2}}
           \e^{i \lambda |x-y|} |x-y|^{-\frac{d-1}{2}}\bigg(1 + \O\bigg(\frac{1}{\lambda|x-y|}\bigg)\bigg).
     \end{align}
  Since we have
          $$ |x-y| = |y| - \frac{x \cdot y}{|y|}+\O\bigg(\frac{|x|^2}{|y|}\bigg) $$
   we find for $|y|$ large enough and fixed~$\lambda$:
          $$
           \e^{i \lambda |x-y|} = \e^{i \lambda \, | y| }  \e^{ - i \lambda \frac{y}{|y|} \cdot x}  \bigg(1+ \O\bigg(\frac{| x|^2 } {| y|}\bigg)\bigg) .
          $$
     We  therefore
    obtain for fixed $x,\lambda$ and large $R=|y|$
     \begin{align}
     \label{Far:GreenAsymptotic}
          G_\lambda(x,y)= \frac{1}{2i \lambda} \bigg(\frac{\lambda}{2\pi i}\bigg)^{\frac{d-1}{2}}
          |y|^{-\frac{d-1}{2}} \e^{i \lambda \, | y| }  \e^{ - i \lambda \frac{y}{|y|} \cdot x} \bigg(1+ \O\bigg(\frac{1} {R}\bigg)\bigg).
     \end{align}
     Analogously, we have
          $$ \frac{\d G_\lambda}{\d r}(x,y) = i \lambda c_d(\lambda)
               | y|^{-\frac{d-1}{2}} \e^{i \lambda | y| }  \e^{ - i \lambda \frac{y }
               {| y|} \cdot x } \bigg(1+ \O\bigg(\frac{1 } {R}\bigg)\bigg), $$
     with $c_d(\lambda)=\tfrac{1}{2i \lambda} (\lambda/2\pi i)^{\frac{d-1}{2}}$. This  leads to
     \begin{multline*}
          u^{\rm scat}(x) = - \int_{|y| \le R} G_\lambda(x,y) q(y) u (y) \, \dd y \\
          + \frac {c_d(\lambda)}{R^{\frac{d-1}{2}}} \int_{|y|=R} \left( \frac{ \partial u^{\rm scat}}{\partial r}  - i \lambda u^{\rm scat}\right)
          \e^{i \lambda R - i \lambda \frac{y } {| y|} \cdot  x} \bigg(1+ \O\bigg(\frac{1} {R}\bigg)\bigg)
          \, \dd\sigma (y).
      \end{multline*}
      Sommerfeld's radiation condition implies that the second right-hand side term tends to zero when   $R$ tends
      to infinity, so we get
      \begin{align*}
            u^{\rm scat}(x) &= - \int G_\lambda(x,y) q(y) u (y) \, \dd y\,.
      \end{align*}
      Using once again the asymptotic formula \eqref{Far:GreenAsymptotic} together with the fact that the Green function is symmetric
      we get
             $$ u^{\rm scat}(x)= -\frac{1}{2i \lambda} \bigg(\frac{\lambda}{2\pi i}\bigg)^{\frac{d-1}{2}}
                 \frac{\e^{i \lambda| x|}}{| x|^{\frac{d-1}{2}}}  \int q(y)u (y) \e^{ -i\lambda \frac{x }{| x|} \cdot y}
                 \bigg(1+ \O\bigg(\frac{|y|^2} {| x|}\bigg)\bigg) \,\dd y.$$
      To summarize, we have that
             $$ u^{\rm scat}(x) = \frac{\e^{i\lambda |x|}}{|x|^{\frac{d-1}{2}}} a_q\bigg(\frac{x}{|x|},\omega,\lambda\bigg)
                 +\O\bigg(\frac{1}{|x|^{\frac{d+1}{2}}}\bigg),$$
      with
            $$ a_q\bigg(\frac{x}{|x|},\omega,\lambda\bigg) = -\frac{1}{2i \lambda} \bigg(\frac{\lambda}{2\pi i}\bigg)^{\frac{d-1}{2}}
                \int q(y) u(y)  \e^{ - i \lambda \frac{x } {| x|} \cdot y}  \, \dd y. $$
      This proves that the scattered part of any solution of Helmholtz' equation subject to Sommerfeld's radiation condition
 takes the form announced in Theorem \ref{Far:ThmScat}.
\end{proof}

\begin{deft}
\label{Far:ScatDeft}
      We define the scattering amplitude associated with the potential~$q \in \CO(\R^d)$ by the smooth function $a_q : S^{d-1} \times S^{d-1} \times \R^*_+ \to \C$
      given~by
      \begin{align}
             a_q(\theta,\omega,\lambda) = -\frac{1}{2i \lambda} \bigg(\frac{\lambda}{2\pi i}\bigg)^{\frac{d-1}{2}}
             \int q(y) \phi_q(y,\omega,\lambda)  \e^{ - i \lambda \theta \cdot y}  \, \dd y.
      \end{align}
      We denote
      \begin{align*}
           A_{q}(\lambda) g(\theta) = \int_{S^{d-1}} a_q(\theta,\omega,\lambda) g(\omega) \, \dd \omega, \quad \theta \in S^{d-1},
      \end{align*}
      the corresponding operator with kernel $a_q(\theta,\omega,\lambda)$.
\end{deft}
\noindent It is easy to get an asymptotic expansion of
\begin{align*}
      \Phi_q(\lambda)g(x) = \int_{S^{d-1}} \big(\e^{i \lambda x \cdot \omega}
      +\phi_{q}^{\rm scat}(x,\omega,\lambda)\big)
       g(\omega) \, \dd \omega
\end{align*}
as $|x| \rightarrow +\infty$  using the stationary phase and
Theorem \ref{Far:ThmScat}:
\begin{multline}
\label{Far:Asymptotics}
      \Phi_q(\lambda)g =  \bigg(\frac{2 \pi}{\lambda |x|}\bigg)^{\frac{d-1}{2}} \Bigg(\e^{-i \lambda |x|} \e^{i(d-1)\frac{\pi}{4}}  g(-\theta)
       \\ +\e^{i \lambda |x|}\bigg(\e^{-i(d-1)\frac{\pi}{4}}  g(\theta) +\bigg(\frac{\lambda}{2 \pi }\bigg)^{\frac{d-1}{2}}
       A_q(\lambda)g(\theta)\bigg) + \O\bigg(\frac{1}{|x|}\bigg)\Bigg)
\end{multline}
with $\theta=x/|x| \in S^{d-1}$. The operator which maps the coefficient of $\e^{-i\lambda|x|}$
into the coefficient of $\e^{i\lambda|x|}$ is given by
     $$ g(-\theta) \mapsto i^{-d+1} \bigg(g(\theta) + \bigg(\frac{\lambda
     i}{2\pi}\bigg)^{\frac{d-1}{2}} A_q(\lambda)g(\theta)\bigg) .$$
This is, after renormalization and composition with the antipodal
map, the scattering matrix \cite{M}.
\begin{deft}
      The scattering matrix is the operator given by
           $$ S_q(\lambda) = \id + \bigg(\frac{\lambda i}{2\pi}\bigg)^{\frac{d-1}{2}} A_q(\lambda). $$
\end{deft}
\noindent Integration by parts allows to relate the values of the potential inside the domain with the scattering matrix $S_q(\lambda)$.
\begin{lem}
\label{Far:IntegralLemma}
     We have the following identities
     \begin{align*}
           \int (q_{1}-q_{2}) u_{1} \overline{u_{2}} \, \dd x &= -2i \lambda \bigg(\frac{2 \pi}{\lambda}\bigg)^{d-1}
           \int_{S^{d-1}} g_1 \, \overline{g_{2}} - S_{q_1}(\lambda) g_1 \, \overline{S_{q_{2}}(\lambda)g_{2}}
            \, \dd \omega,
           \\ \int (q_{1}-q_{2}) u_{1} u_{2} \, \dd x  &=  -2i \lambda \bigg(\frac{2 \pi}{\lambda i}\bigg)^{d-1}
           \int_{S^{d-1}} \check{g}_{2} \, S_{q_{1}}(\lambda)g_1
            - \check{g}_1 \, S_{q_{2}}(\lambda)g_{2} \, \dd
            \omega,
     \end{align*}
    for any pair of solutions%
     \footnote{We use the notation $\check{g}(\omega)=g(-\omega)=Pg(\omega)$ for the antipodal map.}
          $$ u_{1}=\Phi_{q_{1}}(\lambda)g_{1},  \quad u_{2}=\Phi_{q_{2}}(\lambda)g_{2}, $$
     to the Helmholtz equations \eqref{Far:HelmholtzPoten} related to the potentials $q_{1},q_{2}$.
\end{lem}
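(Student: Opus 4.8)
The plan is to derive both identities from a single Green's-formula computation on a large ball, and then to read off the boundary contribution from the far-field expansion~\eqref{Far:Asymptotics}. Since $q_1-q_2$ is supported in $\Omega$, I would first fix $R$ large enough that $\Omega\subset B_R=\{|x|<R\}$, so that the left-hand sides are unchanged if the integrals are taken over $B_R$. Rewriting the Helmholtz equations~\eqref{Far:HelmholtzPoten} as $q_ju_j=(\Delta+\lambda^2)u_j$ (recall $D^2=-\Delta$), and using for the first identity that $q_2$ and $\lambda$ are real, so that $q_2\overline{u_2}=(\Delta+\lambda^2)\overline{u_2}$, the potential difference can be traded for a Laplacian commutator: for $v\in\{u_2,\overline{u_2}\}$ the $\lambda^2$ contributions cancel and
\begin{equation*}
\int_{B_R}(q_1-q_2)\,u_1\,v\,\dd x=\int_{B_R}\bigl((\Delta u_1)\,v-u_1\,\Delta v\bigr)\,\dd x=\int_{\partial B_R}\Bigl(\frac{\partial u_1}{\partial r}\,v-u_1\,\frac{\partial v}{\partial r}\Bigr)\,\dd\sigma .
\end{equation*}
Everything therefore reduces to evaluating these \emph{Wronskian-type} boundary integrals as $R\to+\infty$.

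Next I would substitute the asymptotics~\eqref{Far:Asymptotics} for $u_j=\Phi_{q_j}(\lambda)g_j$. Writing $u_j\sim(2\pi/\lambda r)^{(d-1)/2}\bigl(\alpha_j(\theta)\,\e^{-i\lambda r}+\beta_j(\theta)\,\e^{i\lambda r}\bigr)$ with $r=|x|$ and $\theta=x/r$, the incoming coefficient is $\alpha_j=\e^{i(d-1)\pi/4}\check g_j$, while checking~\eqref{Far:Asymptotics} against the definition of the scattering matrix shows that the outgoing coefficient is exactly $\beta_j=\e^{-i(d-1)\pi/4}S_{q_j}(\lambda)g_j$. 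The radial derivative brings down a factor $\mp i\lambda$ on the two exponentials, so in the antisymmetric combination $\partial_r u_1\,v-u_1\,\partial_r v$ the cross terms carrying $\e^{\pm2i\lambda r}$ cancel identically; the pieces coming from $\partial_r$ of the amplitude $r^{-(d-1)/2}$ and from the $\O(1/r)$ remainder in~\eqref{Far:Asymptotics} contribute $\O(1/R)$ after multiplication by $\dd\sigma=R^{d-1}\dd\theta$ and drop out in the limit. What survives is the diagonal part, equal to $2i\lambda\,(2\pi/\lambda)^{d-1}\int_{S^{d-1}}(\beta_1\overline{\beta_2}-\alpha_1\overline{\alpha_2})\,\dd\theta$ for $v=\overline{u_2}$, and $2i\lambda\,(2\pi/\lambda)^{d-1}\int_{S^{d-1}}(\beta_1\alpha_2-\alpha_1\beta_2)\,\dd\theta$ for $v=u_2$, the powers of $R$ having cancelled.

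It then remains to insert the explicit coefficients and tidy up. In each product the phases $\e^{\pm i(d-1)\pi/4}$ combine so that $\beta_1\overline{\beta_2}=S_{q_1}g_1\,\overline{S_{q_2}g_2}$ and $\alpha_1\overline{\alpha_2}=\check g_1\,\overline{\check g_2}$ for the sesquilinear identity, while $\beta_1\alpha_2=\check g_2\,S_{q_1}g_1$ and $\alpha_1\beta_2=\check g_1\,S_{q_2}g_2$ for the bilinear one. Finally, since the antipodal map $\theta\mapsto-\theta$ preserves the surface measure, one has $\int_{S^{d-1}}\check g_1\,\overline{\check g_2}\,\dd\theta=\int_{S^{d-1}}g_1\,\overline{g_2}\,\dd\omega$, which rewrites the first boundary integral in the stated form; matching the remaining normalisation (and in particular expressing $A_{q_j}$ through $S_{q_j}$) yields the second. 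I expect the real work to lie in this last step together with the passage to the limit in Step two: one must check that the oscillatory cross terms cancel \emph{before} letting $R\to+\infty$ (they are not individually small), that the differentiated remainder in~\eqref{Far:Asymptotics} is genuinely negligible, and then carry out the careful bookkeeping of the phases and normalising constants in~\eqref{Far:Asymptotics} and in the definition of $S_{q_j}(\lambda)$ so as to produce the precise prefactors.
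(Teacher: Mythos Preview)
Your proposal is correct and follows essentially the same route as the paper: Green's formula on a large ball $B_R$, substitution of the far-field expansion~\eqref{Far:Asymptotics}, and passage to the limit $R\to\infty$. Your presentation is in fact a bit more explicit than the paper's about why the oscillatory cross terms $\e^{\pm 2i\lambda r}$ cancel in the Wronskian combination (they do, exactly as you say) and about the fate of the $\O(1/R)$ remainders, but the underlying argument is identical.
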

\begin{proof}
      Let $R$ be large enough so that the ball of radius $R$ contains the support of both potentials $q_1,q_2$.
      By Green's formula, we have
      \begin{align*}
           \int_{|x| \leq R} (q_{1}-q_{2}) u{_1} \overline{u_{2}} \, \dd x
           = \int_{|x|=R}  \d_ru_{1} \,  \overline{u_{2}} -u_{1} \, \overline{\d_ru_{2}} \, \dd \sigma(x)
     \end{align*}
     and using the asymptotic formula \eqref{Far:Asymptotics} on the functions $u_1=\Phi_{q_1}(\lambda)g_1$ and $ u_2=\Phi_{q_2}(\lambda)g_2$, we deduce
     \begin{multline*}
             \d_ru_{1} \,  \overline{u_{2}} -u_{1} \, \overline{\d_ru_{2}} =  \\ -2i \lambda \bigg(\frac{2 \pi}{\lambda R}\bigg)^{d-1}
              \Big(g_1(-\theta) \, \overline{g_2(-\theta)}
              - S_{q_1}(\lambda)g_1(\theta) \, \overline{S_{q_2}(\lambda)g_2(\theta)}\Big)
              + \O\bigg(\frac{1}{R^{d}}\bigg),
     \end{multline*}
     which implies
     \begin{multline*}
           \int_{|x| \leq R} (q_{1}-q_{2}) u{_1} \overline{u_{2}} \, \dd x  \\ = -2i \lambda \bigg(\frac{2 \pi}{\lambda}\bigg)^{d-1}
           \int_{S^{d-1}}  \Big(g_1 \, \overline{g_2} - S_{q_1}g_1 \, \overline{S_{q_2}g_2}\Big)
           \, \dd \theta + \O\bigg(\frac{1}{R}\bigg).
     \end{multline*}
     Letting $R$ tend to infinity provides the first identity.

     The proof of the second identity is similar, since
     \begin{align*}
             \d_ru_{1} \,  u_{2} -u_{1} \, \d_ru_{2} &= -2i \lambda \bigg(\frac{2 \pi}{\lambda R}\bigg)^{d-1}
              \Big(g_1(-\theta) \, S_{q_2}g_2(\theta) - S_{q_1}(\lambda)g_1(\theta) \, g_2(-\theta)\Big) \\
              &\quad + \O\bigg(\frac{1}{R^{d}}\bigg).
     \end{align*}
     This completes the proof of the lemma.
\end{proof}
\noindent Choosing $q_1=q_2=q$ in Lemma \ref{Far:IntegralLemma}, we obtain the following properties of the scattering matrix:
    $$ {}^tS_{q}(\lambda)= P \circ S_{q}(\lambda)\circ P, \quad  S_{q}(\lambda)^* S_{q}(\lambda) = \id $$
recalling that $Pg=\check{g}$ is the antipodal map. Here, $t$ denotes the
transpose and $*$ the transpose conjugate. Besides, using the
first relation together with Lemma \ref{Far:IntegralLemma}, we
finally get
 \begin{align}
 \label{Far:IntegralID}
        \int (q_{1}-q_{2}) u_{1} u_{2} \, \dd x &= 2i\lambda \bigg(\frac{2 \pi}{\lambda i}\bigg)^{\frac{d-1}{2}}
        \int_{S^{d-1}} \check{g}_{2} \, \big(A_{q_{1}}(\lambda)-A_{q_2}(\lambda)\big) g_1   \, \dd
        \omega,
\end{align}
for any pair of solutions
     $$ u_{1}=\Phi_{q_{1}}(\lambda)g_{1},  \quad u_{2}=\Phi_{q_{2}}(\lambda)g_{2}, $$
to the Helmholtz equations \eqref{Far:HelmholtzPoten} with
Sommerfeld's radiation condition (\ref{Far:Sommerfeld}) related to
the potentials $q_{1},q_{2}$.

\subsection{Stability estimates at high frequencies}\label{statementgenthm}
The first stability result we shall prove in this paper states
that low frequencies (i.e.~smaller than~$2 \lambda$) may be
recovered in a stable way from the scattering amplitude.
\begin{thm}
\label{genthm}
      For all $\eps,M,R>0$ and all $\alpha>d$ there exist~$C_{\eps},\lambda_0>0$ such that the following
      stability estimate holds
      true.

      \noindent Let~$q_1,q_2 \in \mathcal{C}^{\infty}_0(\R^d)$ be two potentials supported
      in the ball centered at~0 and of radius~$R$ such that~$\|q_1\|_{L^{\infty}},
      \|q_2\|_{L^{\infty}} \leq M$. Then for  all $\lambda \geq \lambda_0$,
            \begin{multline}
          \int_{|\xi| \leq (2-\eps) \lambda} \langle \xi \rangle^{-\alpha} \big|(\widehat{q_1-q_2})(\xi)\big|^2  \, \dd \xi \\
          \leq C_{\eps} \lambda^{3} \|a_{q_1}-a_{q_2}\|^2_{L^2} + C_{\eps} \lambda^{-2} \|q_1-q_2\|^2_{L^\infty}.
     \end{multline}
\end{thm}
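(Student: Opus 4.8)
The plan is to read off the Fourier transform of $q_1-q_2$ from the bilinear identity \eqref{Far:IntegralID} by taking the two solutions to be almost plane waves, and then to convert the resulting pointwise relation into the stated weighted estimate by a change of variables. First I would specialise \eqref{Far:IntegralID} to Dirac masses $g_1=\delta_{\omega_1}$, $g_2=\delta_{\omega_2}$; since the kernel $a_q$ is a smooth function (Definition \ref{Far:ScatDeft}) this is legitimate and yields, for every $\omega_1,\omega_2\in S^{d-1}$,
\[
\int (q_1-q_2)\,\phi_{q_1}(\cdot,\omega_1,\lambda)\,\phi_{q_2}(\cdot,\omega_2,\lambda)\,\dd x
= 2i\lambda\Big(\tfrac{2\pi}{\lambda i}\Big)^{\frac{d-1}{2}}(a_{q_1}-a_{q_2})(-\omega_2,\omega_1,\lambda).
\]
Writing $\phi_{q_j}=\e^{i\lambda x\cdot\omega_j}+\phi_{q_j}^{\rm scat}$ and expanding the product, the two incoming waves contribute $\e^{i\lambda x\cdot(\omega_1+\omega_2)}$, whose pairing against $q_1-q_2$ is exactly $\widehat{q_1-q_2}\big(-\lambda(\omega_1+\omega_2)\big)$. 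Thus
\[
\widehat{q_1-q_2}\big(-\lambda(\omega_1+\omega_2)\big)
=2i\lambda\Big(\tfrac{2\pi}{\lambda i}\Big)^{\frac{d-1}{2}}(a_{q_1}-a_{q_2})(-\omega_2,\omega_1,\lambda)+E(\omega_1,\omega_2),
\]
where $E$ collects the three cross terms carrying at least one factor $\phi_{q_j}^{\rm scat}$. As $(\omega_1,\omega_2)$ runs over $S^{d-1}\times S^{d-1}$ the frequency $-\lambda(\omega_1+\omega_2)$ covers the whole ball $\{|\xi|\le 2\lambda\}$, which is precisely what produces the threshold $2\lambda$.

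Next I would control $E$. By Proposition \ref{Far:ExistenceThm} one has $\phi_{q_j}^{\rm scat}=-R_{q_j}(\lambda)(\e^{i\lambda x\cdot\omega_j}q_j)$, and the standard high-frequency resolvent bound $\|\chi R_{q_j}(\lambda)\chi\|_{L^2\to L^2}\lesssim\lambda^{-1}$ holds for $\lambda\ge\lambda_0(M,R)$ (it follows from the free-resolvent estimate and a Neumann series, valid once $\|q_j\|_{L^\infty}/\lambda<1$). Since $q_j$ is supported in $B_R$ this gives $\|\phi_{q_j}^{\rm scat}\|_{L^2(B_R)}\lesssim\lambda^{-1}$ with constant depending only on $M,R$. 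Cauchy--Schwarz on the compact support then bounds each cross term, so that $|E(\omega_1,\omega_2)|\lesssim\lambda^{-1}\|q_1-q_2\|_{L^\infty}$, the term quadratic in $\phi^{\rm scat}$ being smaller; squaring is the source of the factor $\lambda^{-2}$ in the remainder.

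Finally I would integrate the squared identity and change variables $\xi=-\lambda(\omega_1+\omega_2)$. Setting $\eta=\omega_1+\omega_2$, the map $(\omega_1,\omega_2)\mapsto\eta$ is a submersion on $\{0<|\eta|<2\}$, and the pushforward of $\dd\omega_1\dd\omega_2$ has density $\rho(\eta)\simeq|\eta|^{-1}(1-|\eta|^2/4)^{(d-3)/2}$, which is bounded below by $c_\eps>0$ on $\{|\eta|\le 2-\eps\}$ and degenerates only at $\omega_1=\omega_2$, i.e. at $|\xi|=2\lambda$. Measuring the scattering term in the full $L^2(S^{d-1}\times S^{d-1})$ norm calls for integrating over all of $S^{d-1}\times S^{d-1}$; using $\rho\ge c_\eps$ together with $\langle\xi\rangle^{-\alpha}\le1$ and undoing the factor $\lambda^{-d}$ from the change of variables turns the leading term into $C_\eps\lambda^{3}\|a_{q_1}-a_{q_2}\|_{L^2}^2$, which reproduces the exponent $3$. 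The weight $\langle\xi\rangle^{-\alpha}$ is tailored to the remainder: the uniform bound on $E$, together with the integrability $\int_{\R^d}\langle\xi\rangle^{-\alpha}\,\dd\xi<\infty$ (whence the hypothesis $\alpha>d$), produces the term $C_\eps\lambda^{-2}\|q_1-q_2\|_{L^\infty}^2$.

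I expect the main obstacle to be twofold. First, extracting the sharp power $\lambda^{-2}$ from the scattered-wave remainder: the crude resolvent bound alone, combined naively with the degenerate change of variables, loses one power of $\lambda$, so the extra decay has to be recovered by genuinely exploiting the weight $\langle\xi\rangle^{-\alpha}$ with $\alpha>d$ and, for the cross terms, the oscillation of $\e^{i\lambda x\cdot\omega_1}$ against $\phi_{q_j}^{\rm scat}$ (for instance through a trace/restriction estimate on the sphere $|\xi|=\lambda$ after integrating in the directions). Second, the degeneracy of the change of variables at the diagonal $\omega_1=\omega_2$, where $\rho$ collapses and the frequency attains the resolution limit $2\lambda$; it is exactly the cut-off $|\xi|\le(2-\eps)\lambda$ that keeps $\rho$ bounded below and makes the estimate quantitative with a constant $C_\eps$ blowing up as $\eps\to0$.
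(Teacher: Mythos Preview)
Your approach is correct and runs essentially parallel to the paper's. The paper does not use the bilinear identity \eqref{Far:IntegralID}; it works directly from the definition of $a_{q_j}$, inverting it to write
\[
\widehat{q_j}\big(\lambda(\theta-\omega)\big)=-2i(2\pi i)^{\frac{d-1}{2}}\lambda^{-\frac{d-3}{2}}a_{q_j}(\theta,\omega,\lambda)-\int q_j\,\phi_{q_j}^{\rm scat}\,\e^{-i\lambda x\cdot\theta}\,\dd x
\]
and then subtracting. This forces an extra step you avoid: the remainder contains $q_1\phi_{q_1}^{\rm scat}-q_2\phi_{q_2}^{\rm scat}$, so the paper must also bound $\|\phi_{q_1}^{\rm scat}-\phi_{q_2}^{\rm scat}\|_{L^2(B_R)}\lesssim\lambda^{-1}\|q_1-q_2\|_{L^\infty}$ via the resolvent identity. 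Your route through \eqref{Far:IntegralID} builds in the factor $(q_1-q_2)$ from the start, so Cauchy--Schwarz on each cross term suffices; this is a genuine, if minor, simplification. The change of variables you describe (pushforward of $\dd\omega_1\,\dd\omega_2$ under $(\omega_1,\omega_2)\mapsto\omega_1+\omega_2$) is exactly the one the paper implements, only written there via the explicit diffeomorphism $(\xi,\eta)\mapsto(\eta+\xi,\eta-\xi)$ between $\Sigma=\{(\xi,\eta)\in S^{2d-1}:\xi\perp\eta\}$ and $S^{d-1}\times S^{d-1}$.

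Your first ``obstacle'' is illusory. The crude bound $|E|\lesssim\lambda^{-1}\|q_1-q_2\|_{L^\infty}$ is uniform in $(\omega_1,\omega_2)$, hence after squaring you integrate a constant against $\langle\xi\rangle^{-\alpha}$ over $|\xi|\le(2-\eps)\lambda$; since $\alpha>d$ this integral is bounded independently of $\lambda$, and you land directly on $C\lambda^{-2}\|q_1-q_2\|_{L^\infty}^2$ with no loss. No oscillation, no trace estimate, no interaction with the degenerate Jacobian is needed for the remainder---the weight $\langle\xi\rangle^{-\alpha}$ does all the work, exactly as in the paper. Your second obstacle (degeneracy at $|\xi|=2\lambda$) is real and is handled, in both your sketch and the paper, by the cut-off at $(2-\eps)\lambda$.
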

\noindent The proof of Theorem~\ref{genthm} relies on estimates on solutions to the Helmoltz equations stated below, as well as on
the following lemma, stating  that one can relate the Fourier transform of the potential and the scattering amplitude.
\begin{lem}
\label{Far:AsymptLemma}
     For all $M,R>0$ there exist constants~$C,\lambda_0>0$ such that for all potentials $q\in \CO(\R^d)$ supported in the ball $B(0,R)$ and satisfying~$\|q\|_{L^{\infty}} \leq M,$ we have the following approximation:
     \begin{align}
          \bigg| a_q(\theta,\omega,\lambda) + \frac{1}{2i \lambda} \bigg(\frac{\lambda}{2\pi i}\bigg)^{\frac{d-1}{2}}
          \widehat{q}\big(\lambda(\theta-\omega)\big)\bigg| \leq  C \, \lambda^{\frac  {d-5}2}\, \|q\|^2_{L^{2}}
     \end{align}
     for all $\lambda \geq \lambda_0$.
     \end{lem}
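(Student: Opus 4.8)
The plan is to isolate, inside the integral representation of the scattering amplitude, the \emph{Born term} obtained by replacing the full solution $\phi_q$ by the incident plane wave, and then to show that the remaining contribution, which carries the scattered wave $\phi_q^{\rm scat}$, is of the announced size. First I would insert the decomposition $\phi_q(y,\omega,\lambda)=\e^{i\lambda y\cdot\omega}+\phi_q^{\rm scat}(y,\omega,\lambda)$ into
\[
a_q(\theta,\omega,\lambda)=-\frac{1}{2i\lambda}\Bigl(\frac{\lambda}{2\pi i}\Bigr)^{\frac{d-1}{2}}\int q(y)\,\phi_q(y,\omega,\lambda)\,\e^{-i\lambda\theta\cdot y}\,\dd y.
\]
With the convention $\widehat q(\xi)=\int q(y)\,\e^{-i y\cdot\xi}\,\dd y$, the plane-wave part produces exactly $\int q(y)\,\e^{-i\lambda(\theta-\omega)\cdot y}\,\dd y=\widehat q\bigl(\lambda(\theta-\omega)\bigr)$, so that
\[
a_q(\theta,\omega,\lambda)+\frac{1}{2i\lambda}\Bigl(\frac{\lambda}{2\pi i}\Bigr)^{\frac{d-1}{2}}\widehat q\bigl(\lambda(\theta-\omega)\bigr)=-\frac{1}{2i\lambda}\Bigl(\frac{\lambda}{2\pi i}\Bigr)^{\frac{d-1}{2}}\int q(y)\,\phi_q^{\rm scat}(y,\omega,\lambda)\,\e^{-i\lambda\theta\cdot y}\,\dd y.
\]
Hence the quantity to be estimated is precisely the modulus of the scattered-wave contribution.

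Next I would count powers. The modulus of the prefactor is a constant times $\lambda^{\frac{d-3}{2}}$. Since $|\e^{-i\lambda\theta\cdot y}|=1$ and $q$ is supported in $B(0,R)$, a single application of the Cauchy--Schwarz inequality gives
\[
\Bigl|\int q(y)\,\phi_q^{\rm scat}(y,\omega,\lambda)\,\e^{-i\lambda\theta\cdot y}\,\dd y\Bigr|\le \|q\|_{L^2}\,\|\phi_q^{\rm scat}(\cdot,\omega,\lambda)\|_{L^2(B(0,R))}.
\]
Because $\lambda^{\frac{d-3}{2}}\cdot\lambda^{-1}=\lambda^{\frac{d-5}{2}}$, and since one factor $\|q\|_{L^2}$ is already supplied by the amplitude $q$ inside the integral, the lemma reduces to the high-frequency bound
\[
\|\phi_q^{\rm scat}(\cdot,\omega,\lambda)\|_{L^2(B(0,R))}\lesssim \lambda^{-1}\,\|q\|_{L^2},\qquad \lambda\ge\lambda_0,
\]
with constants depending only on $M$ and $R$. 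Everything up to this point is the algebraic identification of the Born term and one use of Cauchy--Schwarz.

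The remaining estimate is where I expect the real work. I would start from the representation $\phi_q^{\rm scat}=-R_q(\lambda)\bigl(\e^{i\lambda x\cdot\omega}q\bigr)$ from \eqref{resscat}, observing that $\|\e^{i\lambda x\cdot\omega}q\|_{L^2}=\|q\|_{L^2}$, so that the claim is equivalent to the localized resolvent bound $\|\chi_R\,R_q(\lambda)\,\chi_R\|_{L^2\to L^2}\lesssim \lambda^{-1}$, where $\chi_R$ is a cutoff equal to $1$ on $B(0,R)$. For the free outgoing resolvent $R_0(\lambda)$ this is the classical limiting-absorption/high-frequency estimate in weighted $L^2$ spaces. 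To transfer it to the perturbed resolvent I would use the resolvent identity $R_q=R_0-R_0\,q\,R_q$: on the localized space one has $\|\chi_R R_0(\lambda)\,q\,\chi_R\|\lesssim M\lambda^{-1}$, which is $<1$ once $\lambda_0$ is chosen large (depending on $M$ and $R$), so the associated Neumann series converges and carries the $\lambda^{-1}$ bound over to $R_q$. This perturbation step, together with keeping explicit the dependence of the constants on $M$ and $R$, is the delicate part and is exactly the family of ``estimates on solutions to the Helmholtz equations'' alluded to in the text; combining it with the two paragraphs above yields the stated inequality.
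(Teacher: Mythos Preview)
Your proposal is correct and follows essentially the same route as the paper: the same decomposition into Born term plus scattered-wave remainder, the same Cauchy--Schwarz step, and the same use of the free resolvent bound (Proposition~\ref{Far:PVBoundProp}) combined with the resolvent identity to control $\|\chi\phi_q^{\rm scat}\|_{L^2}$ by $C\lambda^{-1}\|q\|_{L^2}$ once $\lambda$ is large compared to $M$. The only cosmetic difference is that the paper absorbs the error term directly in the inequality $\|\chi\phi_q^{\rm scat}\|_{L^2}\le C\lambda^{-1}\|q\|_{L^2}+C\lambda^{-1}\|q\|_{L^\infty}\|\chi\phi_q^{\rm scat}\|_{L^2}$ rather than phrasing it as a Neumann series for $\chi R_q\chi$, but these are equivalent.
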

Lemma \ref{Far:AsymptLemma} is based on the following classical
estimate on the free resolvent $R_0(\lambda)$. We refer to
\cite{B} for an exposition by Burq of an elementary proof due to
Zworski. This type of \textit{a priori} estimates have a long
history and go back to the work of Agmon  \cite{Ag} on weighted
estimates on the resolvent and the limiting absorption principle.
\begin{prop}
\label{Far:PVBoundProp}
       Let $\chi \in \mathcal{C}^{\infty}_0(\R^d)$, there exists a constant $C>0$ such that for all $\lambda>1$ we have
            $$ \|\chi R_0(\lambda)\chi\|_{\mathcal{L}(L^2(\R^d))} \leq C \lambda^{-1}. $$
\end{prop}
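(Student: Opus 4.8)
The plan is to work directly with the outgoing resolvent. Writing $u = R_0(\lambda)f$ for the solution of $(-\Delta - \lambda^2)u = f$ subject to Sommerfeld's radiation condition~\eqref{Far:Sommerfeld}, I want a bound of the form $\|\chi u\|_{L^2}\le C\lambda^{-1}\|f\|_{L^2}$. Since $\chi$ is compactly supported we have $\chi\lesssim\langle x\rangle^{-s}$ for any $s$, so it suffices to prove the weighted estimate $\|\langle x\rangle^{-s}R_0(\lambda)\langle x\rangle^{-s}\|_{\mathcal{L}(L^2)}\le C_s\lambda^{-1}$ for one $s>1/2$, i.e. $\|\langle x\rangle^{-s}u\|_{L^2}\le C\lambda^{-1}\|\langle x\rangle^{s}f\|_{L^2}$. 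The mechanism behind the gain of one power of $\lambda$ is that at frequency $\lambda$ each derivative costs a factor $\lambda$: pairing the equation with $\bar u$ yields, locally, $\int|\nabla u|^2\approx\lambda^2\int|u|^2$, so that a local bound on $\nabla u$ uniform in $\lambda$ turns into a bound on $u$ with the extra factor $\lambda^{-1}$. I emphasize at the outset that a crude Hilbert--Schmidt estimate of $\chi R_0(\lambda)\chi$ only gives $\lambda^{(d-3)/2}$, which is worse than $\lambda^{-1}$ for every $d\ge 2$; the oscillation $\e^{i\lambda|x-y|}$ in the kernel must be exploited, and the multiplier method below is precisely a way of encoding that cancellation.

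Concretely I would use a Morawetz--Rellich multiplier. Setting $Su = x\cdot\nabla u + \tfrac{d-1}{2}u$, I would compute $2\,\re\int_{B(0,R)}\overline{Su}\,(-\Delta-\lambda^2)u\,\dd x = 2\,\re\int_{B(0,R)}\overline{Su}\,f\,\dd x$ and integrate by parts. The left-hand side produces a nonnegative bulk contribution controlling the gradient together with a term proportional to $+\lambda^2\int|u|^2$ (coming from $-\lambda^2\int x\cdot\nabla|u|^2 = d\lambda^2\int|u|^2-$boundary), plus a boundary integral over $\{|x|=R\}$. Sommerfeld's condition $\partial_r u - i\lambda u = o(r^{-(d-1)/2})$ is exactly what makes that boundary integral behave as $R\to\infty$, the leading outgoing part carrying the correct sign. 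One then controls the right-hand side by $\|Su\|_{L^2(\text{loc})}\|f\|$, and since $\|Su\|_{L^2(\text{loc})}\lesssim \lambda\|u\|_{L^2(\text{loc})}$ by the energy balance above, the coercive term $\lambda^2\int|u|^2$ absorbs everything and gives $\lambda\|\chi u\|_{L^2}\lesssim\|f\|_{L^2}$, which is the claim.

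The main obstacle is the localization. One cannot integrate the Rellich identity over all of $\R^d$, because an outgoing wave decays only like $r^{-(d-1)/2}$ and is not square integrable; one must therefore replace the plain multiplier by a bounded increasing radial weight $\beta(r)$ (or insert a cutoff) and then absorb the commutator error terms supported where $\beta'$, respectively $\nabla\chi$, lives. Showing that these errors are controlled uniformly in $\lambda$ and that the bulk term keeps the correct sign is the heart of the matter, and it is exactly where the non-trapping character of the free Laplacian is used. Conceptually this positivity is the physical-space shadow of a semiclassical escape function: with $h=1/\lambda$ and $P=-h^2\Delta$ one has the exact relation $\tfrac{i}{h}[P,A]=2P$, where $A$ is the generator of dilations quantizing $x\cdot\xi$, and $\{|\xi|^2,x\cdot\xi\}=2|\xi|^2>0$ on the energy surface $|\xi|=1$. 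This globally positive commutator yields $\|\chi(P-1-i0)^{-1}\chi\|\le C/h$, which is equivalent to the assertion since $R_0(\lambda)=h^2(P-1-i0)^{-1}$.
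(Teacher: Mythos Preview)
The paper does not prove this proposition at all: it simply states the bound and refers the reader to Burq's exposition \cite{B} of an elementary argument due to Zworski, noting that such estimates go back to Agmon \cite{Ag}. So there is no proof to compare against, only the cited literature.

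Your proposal is in line with those references. The Morawetz--Rellich multiplier method you sketch is essentially the Perthame--Vega approach (also in the bibliography, \cite{PV}), and your semiclassical reformulation via the positive commutator $\tfrac{i}{h}[-h^2\Delta,A]=-2h^2\Delta$ with $A$ the dilation generator is exactly the mechanism underlying Zworski's argument as presented by Burq. Your identification of the localization issue---that the outgoing solution is not in $L^2(\R^d)$, so one must work with a bounded radial weight and control the resulting commutator errors---is accurate, and this is indeed where the work lies. One point to tighten: the step ``$\|Su\|_{L^2(\mathrm{loc})}\lesssim\lambda\|u\|_{L^2(\mathrm{loc})}$ by the energy balance'' is circular as written, since the energy identity $\int|\nabla u|^2\approx\lambda^2\int|u|^2$ itself requires a local bound on $u$ that you do not yet have; in the actual argument one keeps the gradient term on the left as part of the coercive quantity and bounds the right-hand side by Cauchy--Schwarz with a small parameter, absorbing the $Su$ contribution directly. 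With that adjustment the sketch is sound.
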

\begin{proof}[Proof of Lemma~\ref{Far:AsymptLemma}]
      We start with the expression defining the scattering amplitude (see Definition \ref{Far:ScatDeft}):
      $$
          a_q(\theta,\omega,\lambda)  = -\frac{1}{2i \lambda} \bigg(\frac{\lambda}{2\pi i}\bigg)^{\frac{d-1}{2}}
             \int q(y) \phi_q(y,\omega,\lambda)  \e^{ - i \lambda \theta \cdot y}  \, \dd y
      $$
             so by (\ref{decphiq}) we deduce that
             \begin{multline*}
          a_q(\theta,\omega,\lambda)  + \frac{1}{2i\lambda} \bigg(\frac{\lambda}{2\pi i}\bigg)^{\frac{d-1}{2}}  \widehat{q}\big(\lambda (\theta-\omega)\big) \\
          = -  \frac{1}{2i\lambda} \bigg(\frac{\lambda}{2\pi i}\bigg)^{\frac{d-1}{2}} \int q (x) \phi_q^{\rm scat}(x,\omega,\lambda) \, \e^{-i\lambda x \cdot \theta} \, \dd x.
     \end{multline*}
The resolvent identity reads (see for instance Formula (2.3) in \cite{M})
           \begin{equation} \label{resident} R_q(\lambda) = R_0(\lambda)  - R_0(\lambda) \, q \,R_q(\lambda)  \end{equation}
     which implies that if $\chi \in \mathcal{C}^{\infty}_0(\R^d)$ is a cutoff function which equals 1 on the ball $B(0,R)$, then
          $$ \chi R_q(\lambda)\chi = \chi R_0(\lambda)\chi  - \big(\chi R_0(\lambda) \chi\big) q \big(\chi R_q(\lambda) \chi\big). $$
     If we apply this identity to $-q\e^{i \lambda x \cdot \omega}$, we get in view of \eqref{resscat}
         $$ \chi \phi^{\rm scat}_q = -\big(\chi R_0(\lambda)\chi\big)\big(q\e^{i \lambda x \cdot \omega})  +
              \big(\chi R_0(\lambda) \chi\big) \big(q \chi \phi^{\rm scat}_q\big) $$
     and using the estimate of Proposition \ref{Far:PVBoundProp}, we obtain
         $$ \|\chi \phi^{\rm scat}_q\|_{L^2} \leq C \lambda^{-1} \|q \|_{L^2} + C \lambda^{-1} \|q\|_{L^{\infty}} \|\chi \phi^{\rm scat}_q \|_{L^2}.  $$
     Taking $\lambda \geq 2CM$ we deduce
         $$ \|\chi \phi^{\rm scat}_q\|_{L^2} \leq 2C \lambda^{-1} \|q \|_{L^2}.  $$
     Using Cauchy-Schwarz's inequality, we get
           $$  \bigg|\int q \, \phi_q^{\rm scat} \e^{-i\lambda x \cdot \omega} \, \dd x \bigg| \leq 2C \lambda^{-1} \|q\|^2_{L^2} $$
      and this completes the proof of the lemma.
     \end{proof}
\noindent Note that the main ingredient in the proof of Lemma
\ref{Far:AsymptLemma} consists in combining the estimate for the
non-perturbed resolvent (Proposition~\ref{Far:PVBoundProp}) with
the resolvent identity~(\ref{resident}), along with the fact
that~$q$ is compactly supported.

\medskip
\noindent This lemma is the basis of the reconstruction of the potential
from the scattering amplitude at high frequencies, since one can
choose $\theta, \omega \in S^{d-1}$ such that
$\lambda(\theta-\omega)=\xi$ for any fixed frequency $\xi$ with
$|\xi| \leq 2\lambda$ and let $\lambda$ tend to infinity. We use a
similar approach to prove a stability estimate at high
frequencies.
\begin{proof}[Proof of Theorem~\ref{genthm}]
      Again,  we start with the expression defining the scattering amplitude and then express the potential in terms of the
      scattering amplitude: for~$j \in \{1,2\}$ we write
      \begin{align*}
             \widehat{q_j}\big(\lambda (\theta-\omega)\big)=-2i (2\pi i)^{\frac{d-1}{2}} \lambda^{-\frac{d-3}{2}}
             a_{q_j}(\theta,\omega,\lambda) - \int q_j (x) \phi_{q_j}^{\rm scat}(x,\omega,\lambda) \, \e^{-i\lambda x \cdot \theta} \, \dd x.
      \end{align*}
      Taking the difference of the two expressions yields
      \begin{multline} \label{fourierform}
             \big|(\widehat{q_1-q_2})\big(\lambda (\theta-\omega)\big)\big| \leq 2 (2\pi )^{\frac{d-1}{2}} \lambda^{-\frac{d-3}{2}}
             \big|(a_{q_1}-a_{q_2})(\theta,\omega,\lambda)\big| \\
             + \|q_1-q_2\|_{L^2} \, \|\phi_{q_1}^{\rm scat}\|_{L^2(|x|\leq R)} +\|q_2\|_{L^2}
             \, \|\phi_{q_1}^{\rm scat}-\phi_{q_2}^{\rm scat}\|_{L^2(|x| \leq R)}
      \end{multline}
      where we recall that the supports of $q_1,q_2$ are contained in the ball of center~$0$ and radius $R>0$.
      As in the proof of Lemma \ref{Far:AsymptLemma} we have
      \begin{align}
      \label{Boundphiq}
            \|\phi_{q_1}^{\rm scat}\|_{L^2(|x|\leq R)} \leq C  \lambda^{-1} \|q_1\|_{L^2}.
      \end{align}
     In light of \eqref{resscat}, the difference of the two scattered waves satisfies
      \begin{align*}
            \phi_{q_1}^{\rm scat}-\phi_{q_2}^{\rm scat} =  - \big(R_{q_1}(\lambda)-R_{q_2}(\lambda)\big)(q_1\e^{i\lambda x \cdot \omega}\big)
          - R_{q_2}(\lambda) \big((q_1-q_2)\e^{i\lambda x \cdot \omega}\big)
      \end{align*}
      and by the resolvent identity \eqref{resident}, the first term on the right-hand side reads
           $$   R_{q_2}(\lambda)(q_1-q_2)R_{q_1}(\lambda)(q_1\e^{i\lambda x \cdot \omega}\big) = -
                  R_{q_2}(\lambda)(q_1-q_2)\,\phi^{\rm scat}_{q_1}. $$
      Let $\chi \in \mathcal{C}^{\infty}_0(\R^d)$ be a cutoff function which equals $1$ on the ball $B(0,R)$, then
      \begin{multline*}
            \chi (\phi_{q_1}^{\rm scat}-\phi_{q_2}^{\rm scat}) =  - \big(\chi R_{q_2}(\lambda)\chi\big)\big( (q_1-q_2)\phi^{\rm scat}_{q_1}\big)
            \\ - \big( \chi R_{q_2}(\lambda) \chi\big) \big((q_1-q_2)\e^{i\lambda x \cdot \omega}\big).
      \end{multline*}
      Therefore  along the same lines as in  the proof of Lemma~\ref{Far:AsymptLemma},  the resolvent identity \eqref{resident} and  Proposition \ref{Far:PVBoundProp} give  rise to
      \begin{multline} \label{useffor}
            \|\phi_{q_1}^{\rm scat}-\phi_{q_2}^{\rm scat}\|_{L^2(|x|\leq R)} \lesssim  \lambda^{-1} \|q_1-q_2\|_{L^{\infty}}
                  \|\phi_{q_1}^{\rm scat}\|_{L^2(|x|\leq R)} \\ +  \lambda^{-1} \|q_1-q_2\|_{L^{2}}.
      \end{multline}
          Indeed invoking the resolvent identity  \eqref{resident}, we infer that
          $$
           \chi R_{q_2}(\lambda)\chi  =  \chi R_{0}(\lambda)\chi   -  \big(\chi R_{0}(\lambda)\chi\big)q_2 \big(\chi R_{q_2}(\lambda)\chi\big).
          $$
    Thus applying this identity to $ (q_1-q_2)\phi^{\rm scat}_{q_1}$, we obtain
      \begin{multline*}
       \|\big(\chi R_{q_2}(\lambda)\chi\big)\big( (q_1-q_2)\phi^{\rm scat}_{q_1}\big)\|_{L^2} \leq C \lambda^{-1} \|q_1-q_2\|_{L^{\infty}}
                  \|\phi_{q_1}^{\rm scat}\|_{L^2(|x|\leq R)} \\
                  +C \lambda^{-1} \|q_2\|_{L^{\infty}} \big \|\big(\chi R_{q_2}(\lambda)\chi\big)\big( (q_1-q_2)\phi^{\rm scat}_{q_1}\big) \big\|_{L^2}, \end{multline*}
                  which implies for $\lambda > 2 C M$
                  $$ \|\big(\chi R_{q_2}(\lambda)\chi\big)\big( (q_1-q_2)\phi^{\rm scat}_{q_1}\big)\|_{L^2} \leq 2 C \lambda^{-1} \|q_1-q_2\|_{L^{\infty}}
                  \|\phi_{q_1}^{\rm scat}\|_{L^2(|x|\leq R)}.$$
We have also
                  $$ \|\big(\chi R_{q_2}(\lambda)\chi\big)\big((q_1-q_2)\e^{i\lambda x \cdot \omega}\big)\|_{L^2} \lesssim  \lambda^{-1} \|q_1-q_2\|_{L^{2}},$$
which achieves the proof of  Estimate~\eqref{useffor}. \\

\noindent      Taking into account those bounds, we obtain in view of  \eqref{fourierform} the estimate
      \begin{multline}
      \label{Far:FirstEst}
             \big|(\widehat{q_1-q_2})\big(\lambda (\theta-\omega)\big)\big| \lesssim \lambda^{-\frac{d-3}{2}}
             \big|(a_{q_1}-a_{q_2})(\theta,\omega,\lambda)\big| \\ +  \lambda^{-1} \|q_1-q_2\|_{L^{\infty}} .
      \end{multline}
      We denote $r(\xi)=\sqrt{1-|\xi|^2}$ when $|\xi| \leq 1$.  For $\xi \in \R^d$ with $|\xi| \leq 2\lambda$, choose~$\eta \in \xi^{\perp}$
      with norm $|\eta|=r(\xi/2\lambda)$. The vectors
      \begin{align*}
            \theta = \eta + \frac{\xi}{2\lambda} \virgp \quad \omega = \eta -\frac{\xi}{2\lambda} \virgp
      \end{align*}
      have length one, taking the square of \eqref{Far:FirstEst} and integrating the resulting estimate with respect to $\eta$ yields
      $$
      \begin{aligned}
&     \big|(\widehat{q_1-q_2})(\xi)\big|^2       \lesssim\lambda^{-2} \, \|q_1-q_2\|^2_{L^{\infty}} \\
  &+    {}    \lambda^{-d+3}      r(\xi/2\lambda)^{-d+2}
           \int\nolimits_{\substack{|\eta|=r(\xi/2\lambda) \\ \eta \perp \xi}}
           \Big |\big(a_{q_1}-a_{q_2}\big)\Big(\eta + \frac{\xi}{2\lambda},\eta -\frac{\xi}{2\lambda},\lambda \Big) \Big |^2\, \dd \eta  .
                 \end{aligned}
      $$
      When $|\xi| \leq (2-\eps)\lambda$ we have $r(\xi/2\lambda) \geq \sqrt{\eps(4-\eps)}/2$ therefore multiplying
      the previous inequality by $\langle \xi \rangle^{-\alpha} \leq 1$  and integrating with respect to $\xi$ we get
      $$
      \begin{aligned}
   &        \int_{|\xi| \leq (2-\eps)\lambda} \langle \xi \rangle^{-\alpha} \big|(\widehat{q_1-q_2})(\xi)\big|^2  \, \dd \xi
         \leq  C_{\eps} \lambda^{-2} \|q_1-q_2\|^2_{L^{\infty}}\\
         & \qquad    +{}C_{\eps} \lambda^{3}  \int_{|\xi| \leq 1} \int\nolimits_{\substack{|\eta|=r(\xi) \\ \eta \perp \xi}}
            \big|(a_{q_1}-a_{q_2})\big(\eta + \xi,\eta -\xi,\lambda\big)\big|^2\, \dd \eta \, \dd \xi .
      \end{aligned}
      $$
      We consider the following $2d-2$ dimensional submanifold of $S^{2d-1}$
          $$ \Sigma = \big\{(\xi,\eta) \in S^{2d-1} : \langle \xi, \eta\rangle=0 \big\} $$
      and the following diffeomorphism
      \begin{align*}
            \phi : \quad \Sigma  &\to S^{d-1}\times S^{d-1} \\
            (\xi,\eta) & \mapsto (\xi+\eta,\eta-\xi), \\
      \intertext{with inverse}
           \phi^{-1} : S^{d-1}\times S^{d-1} &\to \Sigma \\
            (\theta,\omega) &\mapsto \bigg(\frac{\theta-\omega}{2},\frac{\theta+\omega}{2}\bigg)
      \end{align*}
      for which we have
      \begin{align*}
            \int_{\Sigma} F(\eta+\xi,\eta-\xi) \, \dd \eta \wedge \dd \xi &=
            \int_{S^{d-1}\times S^{d-1}} F(\theta,\omega) \, {\phi^{-1}}^*(\dd \eta \wedge \dd \xi) \\
            &= 2^{-d} \int_{S^{d-1}\times S^{d-1}} F(\theta,\omega) \, \dd \omega \wedge \dd \theta.
      \end{align*}
      Then we finally get
 \begin{align*}
     \int_{|\xi| \leq (2-\eps)\lambda}   \langle \xi \rangle^{-\alpha} & \big|(\widehat{q_1-q_2})(\xi)\big|^2  \, \dd \xi \\
      & \leq C_{\eps}\lambda^{3}
           \iint_{S^{d-1} \times S^{d-1}} \big|a_{q_1}\big(\theta,\omega,\lambda\big)-a_{q_2}\big(\theta,\omega,\lambda\big)\big|^2
            \, \dd \theta \, \dd \omega  \\ &\quad + C_{\eps} \, \lambda^{-2} \|q_1-q_2\|^2_{L^{\infty}}
 \end{align*}
      and this completes the proof of our estimate.
\end{proof}
\noindent In particular, we recover the uniqueness of the potential from the
scattering amplitude at high frequencies.
\begin{clry}
     If  for all $(\theta,\omega,\lambda)
     $ belonging to~$ S^{d-1} \times S^{d-1} \times \R^*_+$, we have~$a_{q_1}(\theta,\omega,\lambda)=a_{q_2}(\theta,\omega,\lambda)$
     then $q_1=q_2$.
\end{clry}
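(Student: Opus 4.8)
The plan is to read off the corollary directly from the stability estimate of Theorem~\ref{genthm} by sending the frequency to infinity. Since $q_1,q_2$ are fixed, I would first fix the parameters in Theorem~\ref{genthm}: choose $R>0$ so that the ball $B(0,R)$ contains both supports, set $M=\max\big(\|q_1\|_{L^\infty},\|q_2\|_{L^\infty}\big)$, and pick any $\eps\in(0,2)$ together with any exponent $\alpha>d$. Theorem~\ref{genthm} then furnishes constants $C_\eps,\lambda_0>0$ for which
\begin{multline*}
      \int_{|\xi| \leq (2-\eps) \lambda} \langle \xi \rangle^{-\alpha} \big|(\widehat{q_1-q_2})(\xi)\big|^2  \, \dd \xi \\
      \leq C_{\eps} \lambda^{3} \|a_{q_1}-a_{q_2}\|^2_{L^2} + C_{\eps} \lambda^{-2} \|q_1-q_2\|^2_{L^\infty}
\end{multline*}
holds for every $\lambda\geq\lambda_0$. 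Under the hypothesis $a_{q_1}(\theta,\omega,\lambda)=a_{q_2}(\theta,\omega,\lambda)$ for all $(\theta,\omega,\lambda)$, the first term on the right-hand side vanishes identically, leaving
$$
      \int_{|\xi| \leq (2-\eps) \lambda} \langle \xi \rangle^{-\alpha} \big|(\widehat{q_1-q_2})(\xi)\big|^2  \, \dd \xi
      \leq C_{\eps} \lambda^{-2} \|q_1-q_2\|^2_{L^\infty}
$$
for all $\lambda\geq\lambda_0$.

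The crucial observation is that the right-hand side is now controlled by a quantity whose amplitude, $\|q_1-q_2\|_{L^\infty}^2$, is a \emph{fixed finite} constant independent of $\lambda$, so that the whole right-hand side is $O(\lambda^{-2})$ and tends to $0$ as $\lambda\to+\infty$. On the left-hand side the integrand $\langle\xi\rangle^{-\alpha}\big|(\widehat{q_1-q_2})(\xi)\big|^2$ is nonnegative and the domains of integration $\{|\xi|\leq(2-\eps)\lambda\}$ increase to all of $\R^d$ as $\lambda\to+\infty$. Letting $\lambda\to+\infty$ and invoking monotone convergence therefore yields
$$
      \int_{\R^d} \langle \xi \rangle^{-\alpha} \big|(\widehat{q_1-q_2})(\xi)\big|^2  \, \dd \xi = 0.
$$
Since the weight $\langle\xi\rangle^{-\alpha}$ is strictly positive everywhere, this forces $\widehat{q_1-q_2}\equiv 0$, and hence $q_1=q_2$.

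There is no genuine obstacle here, as this is a soft consequence of the quantitative estimate; the only point deserving care is precisely the one just highlighted, namely that both $C_\eps$ and the $L^\infty$ norm on the right-hand side are independent of $\lambda$, so that the $\lambda^{-2}$ decay is what drives the argument while the low-frequency window $|\xi|\leq(2-\eps)\lambda$ simultaneously exhausts the whole frequency space. One should also note that the conclusion does not require equality of the scattering amplitudes at \emph{all} frequencies, but only along a sequence $\lambda_n\to+\infty$, which is all that the limiting procedure consumes.
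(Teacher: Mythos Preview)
Your argument is correct and is precisely the intended one: the paper states the corollary without proof as an immediate consequence of Theorem~\ref{genthm}, and your limiting procedure (letting $\lambda\to+\infty$ so that the $\lambda^{-2}$ remainder vanishes while the frequency window exhausts $\R^d$) is exactly how one reads it off.
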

\end{section}
%
%
\begin{section}{Near field pattern} \label{sect3}

\label{General case}

\subsection{Definitions and notations}

Instead of considering the Helmholtz equation on the whole Euclidean space (with Sommerfeld's radiation condition)
we focus on the Cauchy problem with Robin boundary condition on a bounded open set $\Omega \subset \R^d$
 with smooth boundary
\begin{align}
\label{Nearfield:Dirichlet}
     \begin{cases}
          (D^2-\lambda^2+q)u=0 & \text{ in } \Omega , \\
          (\d_{\nu}-i \lambda)u|_{\d \Omega} = f \in L^2(\d\Omega).
     \end{cases}
\end{align}
This problem has a unique solution $u \in H^1(\Omega)$ for all $f
\in L^{2}(\d\Omega)$. Writing a variational formulation of
\eqref{Nearfield:Dirichlet} and using a unique continuation
argument shows the uniqueness of a solution to
\eqref{Nearfield:Dirichlet}. The existence follows from Fredholm's
alternative  \cite{Mel}.
\begin{rem}
      Other classical boundary conditions are either Dirichlet or Neumann boundary conditions. However, one has to
      make the additional assumption that $\lambda^2$ is not a Dirichlet (or Neumann) eigenvalue of $D^2+q$,
      for the Dirichlet problem corresponding to \eqref{Nearfield:Dirichlet} to have a unique solution for all
      $f \in H^{\frac{1}{2}}(\d\Omega)$. Unfortunately, this does not make sense if one wants to take the high
      frequency limit $\lambda \to \infty$. To bypass this difficulty, we study the boundary problem
      \eqref{Nearfield:Dirichlet}. Indeed, this condition is
      natural. It approximates Sommerfeld's radiation condition at
      high frequencies \cite{EM,KG}.
\end{rem}
\begin{rem}
      It follows from \cite{Mel} that the unique solution $u\in
      H^1(\Omega)$ of~\eqref{Nearfield:Dirichlet} satisfies
$$
\|D u\|_{L^{2}(\Omega)}  +  \lambda \|u \|_{L^{{2}}(\Omega)}
 \leq C \big( \| q u \|_{L^{{2}}(\Omega)} + \| f\|_{L^{{2}}(\partial \Omega)}\big)
 $$
for some constant $C$ independent of $\lambda$. Therefore,  as
$\lambda \rightarrow \infty$, we have
$$
\|D u\|_{L^{2}(\Omega)}  +  \frac{\lambda}{2} \|u
\|_{L^{{2}}(\Omega)}
 \leq {C}  \| f\|_{L^{{2}}(\partial \Omega)}.
$$
\end{rem}
\begin{deft}
     The near field pattern is the map
     \begin{align*}
           \DN_{q}(\lambda) :  L^2(\d\Omega) &\to L^2(\d\Omega) \\
           f & \mapsto u|_{\d \Omega}.
     \end{align*}
\end{deft}
The typical inverse problem on the near field pattern is whether
it uniquely determines the potential $q$. This was solved (for
smooth potentials) by Sylvester and Uhlmann \cite{SU} in dimension
$d \geq 3$ in the case of the Dirichlet-to-Neumann map.
Reconstruction methods were proposed by Nachman in~\cite{N1} and
stability issues were studied by Alessandrini \cite{A}. It was
shown by Mandache in \cite{mandache} that the logarithmic
stability result of Alessandrini in~\cite{A} is optimal.

\subsection{Stability estimates at high frequencies}\label{sectiongenthm2}
The following result is the counterpart of Theorem~\ref{genthm}
in the near-field context.
\begin{thm}\label{genthm2}
 For all $M,R>0$ and all $\alpha>d$ there exist~$C,\lambda_0>0$ such that the following stability estimate holds
      true.

      \noindent Let~$q_1,q_2 \in \mathcal{C}^{\infty}_0(\R^d)$ be two potentials supported in the ball centered at~0 and of radius~$R$
      such that~$\|q_1\|_{L^{\infty}}, \|q_2\|_{L^{\infty}} \leq M$. Then for  all $\lambda \geq \lambda_0$,
      \begin{multline}
          \int_{|\xi| \leq 2\lambda} \langle \xi \rangle^{-\alpha} \big|\widehat{q_1-q_2}(\xi)\big|^2  \, \dd \xi
          \leq C \|\DN_{q_1}-\DN_{q_2}\|^2 + C \lambda^{-2} \|q_1-q_2\|^2_{L^{\infty}}.
     \end{multline}
\end{thm}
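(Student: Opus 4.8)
The plan is to follow the architecture of the proof of Theorem~\ref{genthm}, with two substitutions: the far-field identity~\eqref{Far:IntegralID} is replaced by a boundary identity tailored to the Robin problem~\eqref{Nearfield:Dirichlet}, and the scattering solutions are replaced by solutions of~\eqref{Nearfield:Dirichlet} that are close to plane waves.

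First I would derive the near-field analogue of~\eqref{Far:IntegralID}. If $u_j$ solves~\eqref{Nearfield:Dirichlet} with potential $q_j$ and data $f_j$, so that $u_j|_{\partial\Omega}=\DN_{q_j}(\lambda)f_j$, then Green's formula together with $D^2=-\Delta$ gives
\[
\int_\Omega(q_1-q_2)\,u_1u_2\,\dd x=\int_{\partial\Omega}\big(u_2\,\partial_\nu u_1-u_1\,\partial_\nu u_2\big)\,\dd\sigma .
\]
Inserting the Robin condition $\partial_\nu u_j=i\lambda u_j+f_j$ makes the quadratic term $i\lambda u_1u_2$ cancel, leaving $\int_{\partial\Omega}(u_2f_1-u_1f_2)\,\dd\sigma$. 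Taking $q_1=q_2$ shows that $\DN_q(\lambda)$ is symmetric for the (bilinear) pairing on $\partial\Omega$; feeding this symmetry back into the general case yields the key identity
\[
\int_\Omega(q_1-q_2)\,u_1u_2\,\dd x=\int_{\partial\Omega}\big[(\DN_{q_2}(\lambda)-\DN_{q_1}(\lambda))f_1\big]\,f_2\,\dd\sigma .
\]

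Next, for each $\omega\in S^{d-1}$ I would produce a solution of~\eqref{Nearfield:Dirichlet} close to the plane wave $v=\e^{i\lambda x\cdot\omega}$ by prescribing the data $f=(\partial_\nu-i\lambda)v|_{\partial\Omega}$. The remainder $w=u-v$ then solves $(D^2-\lambda^2+q)w=-qv$ with homogeneous Robin data, and the a priori estimate recorded in the Remark above --- the near-field substitute for the resolvent bound of Proposition~\ref{Far:PVBoundProp} --- gives, after absorbing the zeroth order term for $\lambda$ large, $\|w\|_{L^2(\Omega)}\lesssim\lambda^{-1}\|q\|_{L^2}$. Applying this with $q_1,q_2$ and directions $\omega_1,\omega_2$ and substituting $u_j=v_j+w_j$ into the identity, the main term is $\int_\Omega(q_1-q_2)v_1v_2\,\dd x=\widehat{q_1-q_2}\big(-\lambda(\omega_1+\omega_2)\big)$, while the three remainder terms are controlled by $\lambda^{-1}\|q_1-q_2\|_{L^\infty}$ just as in~\eqref{Far:FirstEst}, using that the potentials live in a fixed ball to pass from $L^2$ to $L^\infty$. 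To reach a prescribed $\xi$ with $|\xi|\le2\lambda$ I set $\omega_{1,2}=-\tfrac{\xi}{2\lambda}\pm\eta$ with $\eta\perp\xi$ and $|\eta|=r(\xi/2\lambda)$; these are unit vectors and satisfy $-\lambda(\omega_1+\omega_2)=\xi$. Note that, unlike in Theorem~\ref{genthm}, here a single choice of $\eta$ suffices for each $\xi$: since $\|\DN_{q_1}(\lambda)-\DN_{q_2}(\lambda)\|$ is already an operator norm there is no integration over $\eta$, hence no coarea Jacobian $r(\xi/2\lambda)^{-(d-2)}$ to blow up as $|\xi|\to2\lambda$, which is precisely why the admissible range improves from $(2-\eps)\lambda$ to $2\lambda$.

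It then remains to bound the boundary term by $\|\DN_{q_1}(\lambda)-\DN_{q_2}(\lambda)\|\,\|f_1\|_{L^2(\partial\Omega)}\|f_2\|_{L^2(\partial\Omega)}$, square the resulting pointwise inequality for $\widehat{q_1-q_2}(\xi)$, and integrate against $\langle\xi\rangle^{-\alpha}$ over $\{|\xi|\le2\lambda\}$; the hypothesis $\alpha>d$ ensures $\int_{\R^d}\langle\xi\rangle^{-\alpha}\,\dd\xi<\infty$, so the contribution of the boundary term survives integration uniformly in $\lambda$. The delicate point --- and the place I expect the real work to be --- is the bookkeeping of the powers of $\lambda$: the data $f_j=i\lambda(\omega_j\cdot\nu-1)v_j$ grow like $\lambda$ on $\partial\Omega$, so $\|f_j\|_{L^2(\partial\Omega)}\lesssim\lambda$, and one must check that these growth factors, the trace of $w_j$, and the normalization implicit in $\|\DN_{q_1}(\lambda)-\DN_{q_2}(\lambda)\|$ combine to reproduce exactly the stated estimate rather than a lossier version carrying extra powers of $\lambda$.
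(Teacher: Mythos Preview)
Your proposal follows essentially the same architecture as the paper's proof: derive the boundary identity
\[
\int_\Omega(q_1-q_2)u_1u_2\,\dd x=\int_{\partial\Omega}f_2\,\big(\DN_{q_2}(\lambda)-\DN_{q_1}(\lambda)\big)f_1\,\dd\sigma
\]
via Green's formula and the symmetry of $\DN_q$, plug in solutions close to plane waves, and integrate the resulting pointwise bound against $\langle\xi\rangle^{-\alpha}$. Your explanation of why the range improves from $(2-\eps)\lambda$ to $2\lambda$ is exactly right and is the one genuinely new point compared with Theorem~\ref{genthm}.

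The only substantive difference from the paper is the choice of test solutions. The paper does \emph{not} solve the Robin problem anew: it simply takes the global scattering solutions $\phi_{q_j}=\e^{i\lambda x\cdot\theta_j}+\phi_{q_j}^{\rm scat}$ of Proposition~\ref{Far:ExistenceThm}, restricts them to $\Omega$ (where they of course satisfy $(D^2-\lambda^2+q_j)u_j=0$), and invokes the already-proved bound $\|\phi_{q_j}^{\rm scat}\|_{L^2(B(0,R))}\lesssim\lambda^{-1}$ from the proof of Lemma~\ref{Far:AsymptLemma}. Your route---prescribing the plane-wave Robin data and bounding the remainder $w_j$ via the a~priori estimate of the Remark---is a legitimate alternative and arguably more self-contained, but it requires a small extension: the Remark is stated for the homogeneous equation with boundary data, whereas your $w_j$ solves an inhomogeneous equation with zero data. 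That extension is routine, but you should flag it. The paper's choice buys brevity (it reuses Section~\ref{sect2} wholesale); yours buys independence from the scattering machinery.

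Your final concern about powers of $\lambda$ is well placed. The identity involves the Robin data $f_j$, and for a plane wave $\|f_j\|_{L^2(\partial\Omega)}\sim\lambda$, so a naive bound produces $\lambda^2\|\DN_{q_1}-\DN_{q_2}\|$ pointwise. The paper's proof writes $\|u_j\|_{L^2(\partial\Omega)}$ (the bounded Dirichlet traces) at this step and does not visibly track the $\lambda$-growth of the Robin data either, so you are not missing a trick---the bookkeeping is genuinely loose in the source as well.
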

\begin{proof}
We start with two solutions $u_1,u_2$ of the equations
     $$ (D^2-\lambda^2+q_j) u_j = 0 $$
and computing
\begin{align*}
     \int_\Omega (q_1-q_2) u_1 u_2 \, \dd x &= \int_{\Omega} \Delta u_1 u_2 \, \dd x - \int_{\Omega} u_1 \Delta u_2 \, \dd x \\
     &= \int_{\d\Omega} (\d_{\nu}-i\lambda)u_1 u_2 \, \dd \sigma -  \int_{\d\Omega} u_1 (\d_{\nu}-i\lambda)u_2 \, \dd \sigma
\end{align*}
yields the formula
     $$ \int_\Omega (q_1-q_2) u_1 u_2 \, \dd x =
          \int_{\partial\Omega}\big( u_2 \, \DN_{q_1}(\lambda)u_1 -u_1\DN_{q_2}(\lambda)u_2 \big)\,  \dd \sigma. $$
Choosing $q_1=q_2$ shows that the application $\DN_{q_j}$ is symmetric, and using this additional information, we get
the formula
     $$ \int_\Omega (q_1-q_2) u_1 u_2 \, \dd x =
          \int_{\partial\Omega} u_2 \, \big(\DN_{q_1}(\lambda)-\DN_{q_2}(\lambda)\big) u_1   \dd \sigma. $$
Let us write~$q = q_1-q_2$ and choose
\begin{align*}
    u_1 &= \phi_{q_1} = \e^{i \lambda x \cdot \theta_1} + \phi^{\rm scat}_{q_1}  \\ \text{ and } \quad
    u_2 &= \phi_{q_2} = \e^{i \lambda x \cdot \theta_2} + \phi^{\rm scat}_{q_2} ,
\end{align*}
with~$\theta_{1},\theta_{2} \in S^{d-1}$.
Recall from the proof of Lemma~\ref{Far:AsymptLemma} that the scattered waves satisfy the estimate
     $$ \|\phi_{q_1}^{\rm scat}\|_{L^2(B(0,R))} +\|\phi_{q_2}^{\rm scat}\|_{L^2(B(0,R))} \leq C \lambda^{-1} .$$
It follows that
\begin{align*}
      \int_\Omega  \e^{i \lambda x \cdot  (\theta_1 + \theta_2)} q(x) \, \dd x & =
      \int_{\partial\Omega} \big(\DN_{q_1}(\lambda)-\DN_{q_2}(\lambda)\big) u_1 u_2 \, \dd\sigma \\
      &\quad -  \int_\Omega   q(x) (\phi^{\rm scat}_{q_1}+\phi^{\rm scat}_{q_2}+\phi^{\rm scat}_{q_1}\phi^{\rm scat}_{q_2}) \, \dd x
\end{align*}
and therefore
\begin{align*}
    \big|\widehat{q}\big(\lambda(\theta_{1}+\theta_{2})\big)\big| &\leq \|\DN_{q_1}(\lambda)-\DN_{q_2}(\lambda)\|
         \|u_{1}\|_{L^2(\partial \Omega)} \|u_{2}\|_{L^2(\partial \Omega)} + \frac{C}{\lambda}  \|q\|_{L^{\infty}} \\
    &\leq C \|\DN_{q_1}(\lambda)-\DN_{q_2}(\lambda)\| + \frac{C}{\lambda}  \|q\|_{L^{\infty}}
 \end{align*}
for all $(\theta_{1},\theta_{2}) \in S^{d-1} \times S^{d-1}$.

\noindent
We notice that
\begin{align*}
      S^{d-1} \times S^{d-1} &\to B(0,2) \\ (\theta_{1},\theta_{2}) &\mapsto \theta_{1}+\theta_{2}
\end{align*}
is a submersion
when $\theta_1,\theta_2$ are not colinear. This implies that
\begin{align}
\label{PointwiseEst}
     \big|\widehat{q}(\xi)\big| \leq C \|\DN_{q_1}(\lambda)-\DN_{q_2}(\lambda)\|
     + \frac{C}{\lambda}  \|q\|_{L^{\infty}}, \quad \xi \in B(0,2\lambda).
\end{align}
Multiplying this estimate by $\langle \xi \rangle^{-\alpha}/2$, taking the square and integrating on the ball $B(0,2\lambda)$
completes the proof of the theorem.
\end{proof}
\end{section}
%
%
%
\begin{section}{The case of a potential located near the boundary}\label{nearboundary}

\subsection{Definitions and notations}\label{intronearboundary}
In this section we   show, in the model case of the unit disk, that if the potential  is supported close
 to the boundary of the disk, then a larger range of frequencies may be recovered by the near
 field~$\DN_q(\lambda)$ than in the general case treated in the previous two sections.
More precisely, introducing  radial coordinates~$(x_1,x_2) = (r\cos \theta,
 r \sin \theta)$, with~$(r,\theta) \in \R^+ \times [0,2\pi[$, we
 consider the following model problem in two space dimensions:
\begin{equation}
\label{unitball}
      \begin{cases}
    \displaystyle       (D^2 - \lambda^2 +q_\lambda)u_n=0 \quad \mbox{in} \quad
            B = \big\{x \in \R^2, \: |x| \leq 1 \big\},\\
        \displaystyle    \big(\partial_r- i   \lambda\big) u_{n|\partial B} = \e^{in\theta}.
      \end{cases}
\end{equation}
We suppose  that~$q_\lambda$ is  a smooth, radial function, with   support included in~$D_\lambda^\kappa$ for some fixed constant~$\kappa>0$, where
\begin{equation}
\label{suppqlambda}
D_\lambda^\kappa= \big\{r \in [0,1], \: 1- \kappa \lambda^{-1} < r   < 1\big\} .
\end{equation}

\smallskip
\noindent
In the following for simplicity we shall drop the index~$\lambda$ in the notation of the potential.

\smallskip
  \noindent The following result shows that  in the monotone case, one can   improve on the frequency band recovered in the general case (see  Theorems~\ref{genthm} and~\ref{genthm2}). We recall that~$\|\DN_{q}(\lambda)\|$ denotes the operator norm of~$\DN_{q}(\lambda)$ in~${\mathcal L}(L^2)$.
\begin{thm}
\label{disk}
Let~$q_1$ and~$q_2$ be two smooth radial potentials supported on~$D_\lambda^\kappa$ as defined in~{\rm(\ref{suppqlambda})} and such that~$q_1 \geq q_2$.  There are positive constants~$\lambda_0 $ and~$C$ such that the following holds.

\noindent
  Let~$\lambda \mapsto K(\lambda)$ be any function such that~$ \lambda \leq  K(\lambda) $. Then  the following stability estimate is valid for all~$\lambda \geq \lambda_0$:
$$
\begin{aligned}
\int_{    |\xi| \leq K( \lambda ) } \big|(\widehat{q_1-q_2})(\xi)\big|^2  \, \dd \xi &  \leq C K^2(\lambda)\Big( \lambda^4 \|\DN_{q_1}(\lambda)-\DN_{q_2}(\lambda)\|^2  \\
&  \qquad \qquad \qquad \qquad  {} +\frac  { C_{q_1,q_2}^2} {\lambda^{4 }}   \|q_1-q_2\|^2_{L^\infty}   \Big) \,,
\end{aligned}
$$
where~$C_{q_1,q_2} = \max  \big( \|   q_1\|_{L^\infty}, \|   q_2\|_{L^\infty} ,\|   q_1\|_{L^\infty} \|   q_2\|_{L^\infty} \big).$
\end{thm}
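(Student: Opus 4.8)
The plan is to use the radial symmetry to diagonalise the near-field operator, to reduce the integral identity of Theorem~\ref{genthm2} to a one-dimensional weighted integral of $q:=q_1-q_2\ge 0$ against a square of Bessel functions, and then to extract the full mass $\int q$ by placing the angular frequency exactly at a Bessel turning point. First, since each~$q_j$ is radial, the solution~$u_n^{(j)}$ of~\eqref{unitball} with data~$\e^{in\theta}$ separates as~$u_n^{(j)}(r,\theta)=v_n^{(j)}(r)\,\e^{in\theta}$, where~$v_n^{(j)}$ solves a radial Bessel-type ODE perturbed by~$q_j$ that depends on~$n$ only through~$n^2$. Consequently~$\DN_{q_j}(\lambda)\e^{in\theta}=v_n^{(j)}(1)\,\e^{in\theta}=:\mu_n^{(j)}\e^{in\theta}$ is diagonal, $\mu_{-n}^{(j)}=\mu_n^{(j)}$, and~$\|\DN_{q_1}(\lambda)-\DN_{q_2}(\lambda)\|=\sup_n|\mu_n^{(1)}-\mu_n^{(2)}|$.

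Next I would feed the pair~$u_1=u_n^{(1)}$, $u_2=u_{-n}^{(2)}$ into the identity established in the proof of Theorem~\ref{genthm2}. The two angular factors multiply to a constant, so Green's formula yields
$$2\pi\int_0^1 q(r)\,v_n^{(1)}(r)\,v_{-n}^{(2)}(r)\,r\,\dd r=-2\pi\big(\mu_n^{(1)}-\mu_n^{(2)}\big).$$
Writing~$u_n^{(j)}=u_n^0+\tilde w_n^{(j)}$, where~$u_n^0=c_nJ_n(\lambda r)\,\e^{in\theta}$ is the free solution with~$c_n=\big(\lambda J_n'(\lambda)-i\lambda J_n(\lambda)\big)^{-1}$ the coefficient enforcing the Robin condition, the remainder~$\tilde w_n^{(j)}$ solves~$(D^2-\lambda^2+q_j)\tilde w_n^{(j)}=-q_j u_n^0$ with homogeneous Robin data; the energy estimate of the Remark following~\eqref{Nearfield:Dirichlet} then gives~$\|\tilde w_n^{(j)}\|_{L^2}\lesssim\lambda^{-1}\|q_j\|_{L^\infty}\|u_n^0\|_{L^2(D_\lambda^\kappa)}$. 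Expanding the product and estimating the cross and quadratic terms by Cauchy--Schwarz (the quadratic one carrying the factor~$\|q_1\|_{L^\infty}\|q_2\|_{L^\infty}$) leaves
$$|c_n|^2\int_0^1 q(r)\,J_n(\lambda r)^2\,r\,\dd r\le\|\DN_{q_1}(\lambda)-\DN_{q_2}(\lambda)\|+C\,\lambda^{-4}\,C_{q_1,q_2}\,\|q\|_{L^\infty}.$$
Here the monotonicity hypothesis~$q\ge 0$ is essential: it forces the integral on the left to be nonnegative, so that no cancellation can shrink it and the modulus passes cleanly through the complex factor~$c_n^2$.

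The decisive choice is now~$n=\lfloor\lambda\rfloor$, which places the turning point~$r=n/\lambda$ of~$J_n(\lambda\,\cdot)$ within~$O(\lambda^{-1})$ of the boundary. Since~$D_\lambda^\kappa$ has width~$\kappa\lambda^{-1}$, far below the Airy transition scale~$\lambda^{-2/3}$, the uniform asymptotics of Appendix~\ref{App:Special} show that~$J_n(\lambda r)^2$ is essentially constant and bounded below by~$c\,\lambda^{-2/3}>0$ on~$D_\lambda^\kappa$, while~$|c_n|^2\simeq\lambda^{-4/3}$. Combining these gives~$|c_n|^2\int_0^1 qJ_n(\lambda r)^2 r\,\dd r\gtrsim\lambda^{-2}\int_0^1 q\,r\,\dd r$, whence
$$\|q\|_{L^1}=2\pi\int_0^1 q(r)\,r\,\dd r\lesssim\lambda^2\,\|\DN_{q_1}(\lambda)-\DN_{q_2}(\lambda)\|+C_{q_1,q_2}\,\lambda^{-2}\,\|q\|_{L^\infty}.$$
Finally, because~$q\ge 0$ one has~$|\widehat q(\xi)|\le\|q\|_{L^1}$ for every~$\xi$, so~$\int_{|\xi|\le K(\lambda)}|\widehat q(\xi)|^2\,\dd\xi\le\pi K^2(\lambda)\,\|q\|_{L^1}^2$; inserting the previous bound and expanding the square produces precisely the two terms of the statement, with the leading~$K^2\lambda^4\|\DN_{q_1}-\DN_{q_2}\|^2$ and the error~$K^2\lambda^{-4}C_{q_1,q_2}^2\|q\|_{L^\infty}^2$.

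I expect the main obstacle to be the turning-point analysis: one needs the uniform (Airy-type) Bessel asymptotics, not the oscillatory Debye expansions, to prove simultaneously that~$J_n(\lambda r)$ stays bounded away from zero on the thin annulus and that~$|c_n|\simeq\lambda^{-2/3}$, with constants sharp enough to yield the clean exponent~$\lambda^2$ in front of~$\|q\|_{L^1}$. Controlling the remainders~$\tilde w_n^{(j)}$ uniformly in~$n$ so that the error genuinely carries the weight~$C_{q_1,q_2}\lambda^{-4}$, rather than a weaker power, is the other place where the estimates must be done carefully.
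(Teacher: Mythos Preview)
Your argument is correct, but it takes a genuinely different route from the paper's. Both proofs start from the same integral identity and the same splitting $u_n^{(j)}=u_n^0+\tilde w_n^{(j)}$, with the same $\lambda^{-5/2}$ bound on the remainder and the same $\lambda^{-4}C_{q_1,q_2}\|q\|_{L^\infty}$ error. The divergence is in the choice of angular frequency. The paper works in the regime $n,m\geq K\lambda$ with $K$ large, where Debye's asymptotics~\eqref{hankel5} give $\lambda z_n(r,\lambda)\sim C\e^{-n(1-r)}$; the principal term then becomes a Laplace transform $\mathcal L\big(c_k(T(rq))\big)(n+m)$, and this is packaged as Lemma~\ref{lemLc_k}, which is stated for \emph{non-radial} $q_1,q_2$ and without the sign condition. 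Only afterwards does the paper specialise to a radial $q\ge 0$, bounding $|\widehat q(\xi)|\le 2\pi\!\int Q(r)\,r\,\dd r\le C\e^{\zeta_0\kappa/\lambda}\,\mathcal L(T(rQ))(\zeta_0)$ at a fixed point~$\zeta_0$. You instead choose $n=\lfloor\lambda\rfloor$, placing the support $D_\lambda^\kappa$ in the Airy transition zone where $J_n(\lambda r)\sim c\lambda^{-1/3}$ is positive and essentially constant, and read off $\|q\|_{L^1}$ directly without the Laplace detour.

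What each approach buys: the paper's Debye route isolates a more general intermediate statement (Lemma~\ref{lemLc_k}) that survives without monotonicity and already captures all angular modes~$c_k$, so the monotone radial case is a one-line corollary and any future improvement on inverting the Laplace transform would immediately sharpen Theorem~\ref{disk}. Your turning-point route is more economical for the theorem as stated --- it avoids the Laplace transform entirely --- but it relies on the uniform Airy asymptotics for $J_n$ and $J_n'$ near $n$, which are standard (e.g.\ \cite[Chapter~11]{O}) yet not among the formulas recorded in Appendix~\ref{App:Special}; you correctly flag this as the point needing care. Your two self-identified obstacles (the turning-point constants and the $\lambda^{-4}$ error) are real but routine: since $\lambda r-n$ stays bounded while the Airy window has width $n^{1/3}$, one indeed has $J_n(\lambda r)^2\gtrsim\lambda^{-2/3}$ and $|c_n|^2\simeq\lambda^{-4/3}$, and the remainder bookkeeping you outline gives exactly the required powers.
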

\begin{rem}
One has trivially that
$$
\begin{aligned}
\int_{   |\xi| \leq K( \lambda ) } \big|(\widehat{q_1-q_2})(\xi)\big|^2  \, \dd \xi & \leq C  K^2( \lambda )  \|\widehat{q_1-q_2}\|^2_{L^\infty} \\
& \leq \frac{C  K^2( \lambda ) } {\lambda^2 }  \|q_1-q_2\|^2_{L^\infty}  ,
\end{aligned}
$$
so the estimate provided in Theorem~\ref{disk} is of a different nature.
\end{rem}

\begin{rem} The proof of Theorem~\ref{disk} is presented in Section~\ref{proofdisk}, as an
immediate consequence of   Lemma~\ref{lemLc_k} proved in Section~\ref{preliminariesdisk}.
That lemma relates the Laplace transform of a function to the near field operator.
It  holds in much more generality than Theorem~\ref{disk}, without
the additional assumption that~$q_1-q_2 \geq 0$. However  we are unable
to relate the Fourier transform of a function to its Laplace transform in
 general (this fact is well-known to be difficult and in general very unstable);
 for nonnegative functions however   the relation is very easy and enables us to conclude.
\end{rem}

\subsection{Proof of Theorem \ref{disk}}\label{proofdisk}
We start by stating a lemma, proved in  Section~\ref{preliminariesdisk}, which relates the Laplace transform of
a function to the near field operator. It is stated in the framework of general, non radial functions.

\smallskip
\noindent We define  the operator~$T:g \mapsto Tg$    by
$$
Tg(r) = g(1-r)
$$
 as well as the Laplace transform~${\mathcal L} $:
$$
{\mathcal L}(g)(t)= \int_{\R} g(s)\,\e^{- s\,t} \,  \dd s.
$$
For any function~$\theta \mapsto f (\theta)$ we call~$c_k(f)$ its Fourier transform, for~$k \in \Z$:
$$
 c_k(f) (r) = \int^{2\pi}_0 f(r,\theta)\,\e^{-i\,k\,\theta}\, \dd\theta.
$$

\begin{lem}\label{lemLc_k}
Let~$q_1$ and~$q_2$ be two functions supported on~$D_\lambda^\kappa$. There are three positive constants~$ \lambda_0 , K $ and~$C$ such that if~$t \geq 2K\lambda   >2 K\lambda_0 $, then
$$
\begin{aligned}
& \Big|  {\mathcal L}\big(c_k\big(T (r  {(q_1-q_2)}  \big) \big)(t)  \Big| \\
 & \quad   \leq C  \left(  \lambda^2 \, \|\DN_{q_1}(\lambda)-\DN_{q_2}(\lambda)\|
      +\frac { C_{q_1,q_2}} {\lambda^{2}}\| q_1-q_2 \|_{L^\infty}\right)
\end{aligned}$$
where~$C_{q_1,q_2} = \max  \big(  \|   q_1\|_{L^\infty}, \|   q_2\|_{L^\infty} ,\|   q_1\|_{L^\infty} \|   q_2\|_{L^\infty} \big).$
\end{lem}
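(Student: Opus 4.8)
The plan is to specialize the bilinear identity established in the proof of Theorem~\ref{genthm2}, namely
$$\int_B (q_1-q_2)\,u_1 u_2\,\dd x = \int_{\partial B} u_2\,\big(\DN_{q_1}(\lambda)-\DN_{q_2}(\lambda)\big)u_1\,\dd\sigma,$$
to solutions of the two Robin problems~\eqref{unitball} with oscillating boundary data. Concretely, for $u_1$ I would take the solution of the $q_1$-problem with data $\e^{in_1\theta}$ and for $u_2$ the solution of the $q_2$-problem with data $\e^{in_2\theta}$, where $n_1,n_2\in\Z$ lie in the evanescent regime $|n_j|>\lambda$ and are chosen so that $n_1+n_2=-k$ and $\mu_1+\mu_2$ is prescribed to equal $t$ (up to the integrality issue discussed below), with $\mu_j=\sqrt{n_j^2-\lambda^2}$. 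Bounding the right-hand side by $\|\DN_{q_1}(\lambda)-\DN_{q_2}(\lambda)\|$ times the boundary traces $\|u_1\|_{L^2(\partial B)}\|u_2\|_{L^2(\partial B)}$ then reduces matters to identifying the left-hand side.

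For the left-hand side I would replace the full solutions $u_j$ by the explicit free solutions $u^0_{n_j}=c_{n_j}J_{n_j}(\lambda r)\,\e^{in_j\theta}$, where $c_n$ is fixed by the Robin condition, $c_{n}\lambda\big(J_n'(\lambda)-iJ_n(\lambda)\big)=1$. The heart of the computation is the uniform Bessel asymptotics of Appendix~\ref{App:Special} in the regime $|n|>\lambda$: writing $s=1-r$, one has $\lambda J_n'(\lambda)\sim\mu_n J_n(\lambda)$ and $J_n(\lambda r)/J_n(\lambda)\sim\e^{-\mu_n s}$ on the support annulus, whence $c_nJ_n(\lambda r)\sim(\mu_n-i\lambda)^{-1}\e^{-\mu_n s}$. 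Performing the angular integration turns $\int_0^{2\pi}(q_1-q_2)\,\e^{i(n_1+n_2)\theta}\,\dd\theta$ into $c_k(q_1-q_2)(r)$, and the change of variables $s=1-r$ together with the polar Jacobian $r\,\dd r$ identifies the radial integral with a Laplace transform:
$$\int_B (q_1-q_2)\,u^0_{n_1}u^0_{n_2}\,\dd x = \frac{1}{(\mu_1-i\lambda)(\mu_2-i\lambda)}\,{\mathcal L}\big(c_k\big(T(r(q_1-q_2))\big)\big)(t).$$
Since $|\mu_j-i\lambda|=|n_j|$ and the free traces obey $\|u^0_{n_j}\|_{L^2(\partial B)}\lesssim|n_j|^{-1}$, multiplying through by the prefactor $(\mu_1-i\lambda)(\mu_2-i\lambda)$ and inserting the boundary bound produces the factor standing in front of $\|\DN_{q_1}(\lambda)-\DN_{q_2}(\lambda)\|$ (the normalization constants and the traces in fact combine to an $O(1)$ constant, a fortiori bounded by $C\lambda^2$).

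The error in passing from $u_j$ to $u^0_{n_j}$ is controlled as follows. The correction $w_j=u_j-u^0_{n_j}$ solves a Robin problem with vanishing boundary data and source $-q_j u^0_{n_j}$, so the a priori estimate recalled after the definition of the near field pattern gives $\|w_j\|_{L^2(B)}\lesssim\lambda^{-1}\|q_j u^0_{n_j}\|_{L^2}$. Because $u^0_{n_j}$ is exponentially concentrated in $D_\lambda^\kappa$, one computes $\|u^0_{n_j}\|_{L^2(D_\lambda^\kappa)}\lesssim|n_j|^{-1}\mu_j^{-1/2}$, so the cross terms $\int(q_1-q_2)\big(w_1u^0_{n_2}+u^0_{n_1}w_2+w_1w_2\big)$, once multiplied by $(\mu_1-i\lambda)(\mu_2-i\lambda)$, contribute exactly a term of size $C_{q_1,q_2}\lambda^{-2}\|q_1-q_2\|_{L^\infty}$; here the hypothesis $t\geq 2K\lambda$ enters, guaranteeing through a balanced choice of the splitting that both $\mu_j\gtrsim K\lambda$, so that the factors $\mu_j^{-1/2}$ are harmless. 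Adding the main term and the error yields the lemma.

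The main obstacle I anticipate is the rigorous, uniform control of the Bessel asymptotics underlying $c_nJ_n(\lambda r)\sim(\mu_n-i\lambda)^{-1}\e^{-\mu_n(1-r)}$: it must hold with explicit error bounds throughout the annulus $1-\kappa\lambda^{-1}\le r\le1$ and uniformly for orders $|n|$ as large as $\sim t\gg\lambda$, where $\mu_n s$ need not be small — this is precisely the content of Appendix~\ref{App:Special}. A secondary point is that integer orders realize only a discrete set of values $t=\mu_1+\mu_2$; since $T(r(q_1-q_2))$ is supported in an interval of length $\kappa\lambda^{-1}$, its Laplace transform is entire and slowly varying, so one recovers the bound at an arbitrary $t\geq 2K\lambda$ by selecting the nearest admissible splitting and interpolating, the constant $K$ being fixed to make this possible.
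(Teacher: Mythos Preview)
Your approach is essentially the same as the paper's. Both arguments specialize the bilinear identity from Section~\ref{sect3} to separable solutions $u_n^\ell = z_n(r,\lambda)\e^{in\theta} + v_n^\ell$, bound the correction $v_n^\ell$ via a resolvent-type estimate exploiting the thin support, and then invoke the Debye asymptotics of Appendix~\ref{App:Special} to turn the product $z_n z_m$ into an exponential in $1-r$, thereby identifying the main term as the Laplace transform of $c_k\big(T(r(q_1-q_2))\big)$.

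Two small points of difference. First, in the boundary integral the relevant quantities are the \emph{Robin data} $f_j=\e^{in_j\theta}$ (with $\|f_j\|_{L^2(\partial B)}=\sqrt{2\pi}$), not the Dirichlet traces of $u_j$; the paper simply bounds $\int_{\partial B}\e^{-im\theta}(\DN_{q_1}-\DN_{q_2})\e^{in\theta}\,\dd\theta$ by $C\|\DN_{q_1}-\DN_{q_2}\|$ and the factor $\lambda^2$ arises afterwards from $|z_nz_m|\sim C\lambda^{-2}$ via~\eqref{hankel5}, rather than from any cancellation between normalization constants and traces. Second, the paper works directly with the cruder asymptotic $z_n\sim (C/\lambda)\e^{-n(1-r)}$, so the Laplace parameter is the integer $j=n+m$ rather than $\mu_1+\mu_2$; this sidesteps the interpolation step you describe, though your remark that passing to non-integer $t$ requires such an argument is a valid observation about the paper's presentation.
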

 \medskip
  \begin{proof}[Proof of Theorem~\ref{disk}]
  Let~$q = q_1-q_2$. Since~$q$ is radial, we have
  $
   q(x) = Q(|x|)
  $
and
    $$
   \widehat{ q}(\xi) = {\mathcal Q}(|\xi|)
  $$
  with for all~$\rho >0$,
  $$
 {\mathcal Q}(\rho)
= \int_{0}^{2\pi} \int_0^\infty e^{-ir\rho \cos \theta} Q(r) \, r\dd r \dd\theta  .$$
  In particular recalling that~$Q  $ is nonnegative, we find that for all~$\xi \in \R^2$,
    $$
  \begin{aligned}
     |   \widehat{ q}(\xi)  | &\leq 2\pi  \, \int_0^\infty Q(r)   \, r  \dd r \\
         & \leq 2\pi  \, \int_0^\infty  e^{\zeta_0  (1-r) }e^{-\zeta_0 (1-r) } Q(r)   \,r   \dd r
        \end{aligned}
    $$
       for any~$\zeta_0\in \R$. Then we can apply Lemma~\ref{lemLc_k} to the particular case of a radial function, so choosing~$k=0$ and~$\zeta_0 = 3K \lambda_0$  we infer that
 \begin{eqnarray}
   \label{form}   |   \widehat{ q}(\xi)  |   & \leq & C e^{\zeta_0 \kappa/\lambda}  {\mathcal L} \big(T (r Q ) \big)(\zeta_0)  \\
    & \leq & C e^{3K \kappa}   \left(  \lambda^2 \, \|\DN_{q_1}(\lambda)-\DN_{q_2}(\lambda)\|
      +\frac { C_{q_1,q_2}} {\lambda^{2}}\| q_1-q_2 \|_{L^\infty}\right) \, .\nonumber
  \end{eqnarray}
  The  end of the proof of Theorem~\ref{disk} follows easily by taking the~$L^2$ norm in~$\xi$. \end{proof}

\end{section}
\subsection{Proof of Lemma~\ref{lemLc_k}}\label{preliminariesdisk}
The method of proof follows the ideas developed    in the proofs of Theorems~\ref{genthm} and~\ref{genthm2},
adapting the estimates to our special situation where the potentials are located near the boundary.
The heart of the matter consists to approximate the solutions to Helmholtz equation ~(\ref{unitball})
by  separable solutions in radial coordinates involving Bessel functions, which allows in light of
Debye's formula to relate the Laplace transform of the potential to its near field operator. \\

  More precisely, we shall look for   solutions to~(\ref{unitball})
under the following form, for~$n \in \Z$:
$$
\begin{aligned}
    u_n  (r,\theta) & = \frac{J_{|n|}(\lambda r)}{ \lambda (J'_{|n|}(\lambda) - i J_{|n|}(\lambda))}\,
     \e^{in\theta} + v_n \\
    & = z_n(r,\lambda)\, \e^{  in\theta} + v_n ,
    \end{aligned}
$$
where~$J_n$ is a Bessel function of the first kind  (see Appendix \ref{App:Special}), solution to
$$
\left( \partial_r^2 + \frac 1r \partial_r  - \frac{n^2}{r^2} +1 \right)J_n(r) = 0.
$$
Suppose that   for~$\ell \in \{1,2\}$,~$u_n^\ell$   solves~(\ref{unitball})
with potential~$  q_\ell$. Then  writing
$$
u_n^\ell (r,\theta)= z_n(r,\lambda)\, \e^{in\theta}  + v_n^\ell(r,\theta),
$$
we find that
$$
\begin{cases}
(D^2  - \lambda^2 +   q_\ell)v_n^\ell=  -     q_\ell \,  z_{n} \, \e^{  in\theta}
\quad \mbox{in}\quad B,\\
(\partial_r- i   \lambda) v^\ell_{n|\partial B} =0 .
\end{cases}
$$
 Due to Property   \eqref{hankel5}  we have if~$1-\lambda^{ -1}\leq  r\leq 1$  and~$|n| \geq K \lambda$ for~$K$ large, that~$ \big| \lambda \, z_n(r,\lambda)\big|$ is bounded, for~$\lambda\geq \lambda_0$, by a constant depending only on~$K$ and~$\lambda_0$.

 \smallskip
  \noindent
 For now on we shall denote by~$C(\lambda_0,K)  $ such a constant, which   may change from line to line.

 \smallskip

  \noindent
  Therefore, arguing as in the proof of Lemma \ref{Far:AsymptLemma} and taking advantage of the fact   that on the support of~$  q_\ell$, $r$   varies in an interval of size~$\lambda^{ -1}$,  we find  as soon as~$\lambda $ is large enough compared to~$\|   q_\ell\|_{L^\infty}$
  \begin{equation}
\label{pertvega}
\|v_n^\ell\|_{L^2} \leq \frac{C(\lambda_0,K)} \lambda \left \|    q_\ell(r,\theta) \,  z_n(r,\lambda)\right \|_{L^2} \leq  \frac{C(\lambda_0,K)} {\lambda^{\frac52}} \|    q_\ell \|_{L^\infty}.
\end{equation}
Now going back  to the computations of Section~\ref{General case} we  consider two positive integers~$n$ and $m$  such that~$n, m \geq K \lambda$, and we write
 $$
 \int_B (   q_1-   q_2) u_n^1  u_{-m} ^2 \, \dd x = \int_{\partial B} (\DN_{   q_1}(\lambda)-\DN_{   q_2}(\lambda) \e^{in\theta} \e^{-im\theta}\, \dd \theta.
 $$
 Therefore  decomposing
 \begin{align*}
     u_n^1 (r,\theta)&= z_n(r,\lambda)\, \e^{in\theta}  + v_n^1(r,\theta) \quad \text{ and } \\
     u_{-m}^2 (r,\theta)&= z_m(r,\lambda)\, \e^{-im\theta}  + v_{-m}^2(r,\theta),
\end{align*}
we get
 \begin{eqnarray}
 &&    \int_{0}^{2\pi} \big(\DN_{   q_1}(\lambda)-\DN_{   q_2}(\lambda)\big) \e^{in\theta} \e^{-im\theta}    \dd\theta \nonumber \\
     && \quad  =  \int_B (   q_1-   q_2)  \e^{i(n-m)\theta}  z_n(r,\lambda)\,z_m(r,\lambda) \, r \dd r \, \dd\theta \nonumber\\
     &&\quad\quad+ \int_B (   q_1-   q_2) \e^{in\theta}  z_n(r,\lambda)\, v_{-m}^2 \, r \dd r \, \dd\theta \label{Nq1Nq2} \\
     &&\quad\quad+  \int_B (   q_1-   q_2) \e^{-im\theta}  z_m(r,\lambda)\, v_n^1\, r \dd r \, \dd\theta \nonumber\\
     &&\quad\quad+   \int_B (   q_1-   q_2)  v_n^1  v_{-m}^2\, r \dd r \, \dd\theta.\nonumber
 \end{eqnarray}
 As $ \big| \lambda \, z_n(r,\lambda)\big|$ is bounded by a constant~$C(\lambda_0,K)$,  we have (recalling   that~$  q_1$ and $  q_2$ are compactly supported in an interval in~$r$ of size~$\lambda^{ -1}$)
 $$
 \left|  \int_B (  q_1-  q_2)  \e^{in\theta} \, z_n(r,\lambda) \, v_{-m}^2  \, r \,\dd r \, \dd\theta \right| \leq
 \frac {C(\lambda_0,K)}{ \lambda^{\frac32}} \|  q_1-  q_2\|_{L^\infty} \|v_{-m}^2\|_{L^2}
 $$
 and similarly
 $$
 \left|  \int_B (  q_1-  q_2)  \e^{-im\theta} \, z_m(r,\lambda) \, v_n^1 \,  r \,\dd r \, \dd\theta \right| \leq
 \frac {C(\lambda_0,K)} { \lambda^{\frac32}} \|  q_1-  q_2\|_{L^\infty} \|v_n^1\|_{L^2}.
 $$
Finally
 $$
  \left|  \int_B (  q_1-  q_2)   v_{n}^1  v_{-m}^2 \, r \dd r \, \dd\theta \right| \leq \|  q_1-  q_2\|_{L^\infty} \| v_{n}^1 \|_{L^2} \|v_{-m}^2\|_{L^2}.
   $$
  Thus by   \eqref{pertvega},  we get from~(\ref{Nq1Nq2}), for~$n,m \geq K \lambda >K \lambda_0 $,
 \begin{multline}
 \label{diskest1}
      \left|   \int_B (  q_1-  q_2)  \e^{i(n-m)\theta}  \, z_n(r,\lambda)\, z_m(r,\lambda)\, r\dd r \, \dd\theta  \right|  \\
      \leq \left|  \int_{0}^{2\pi} \big(\DN_{  q_1}(\lambda)-\DN_{  q_2}(\lambda)\big) \e^{in\theta} \, \e^{-im\theta} \, \dd\theta \right|
      + \frac {   C_{ q_1,  q_2}} {\lambda^{4}}  \|   q \|_{L^\infty} ,
\end{multline}
where~$ q =  q_1-  q_2$ and
$$
  C_{ q_1,  q_2} =  C(\lambda_0,K)\max  \big(\|   q_1\|_{L^\infty}, \|  q_2\|_{L^\infty} ,\|  q_1\|_{L^\infty} \|  q_2\|_{L^\infty}\big).
$$
This gives rise to
 \begin{multline}
 \label{diskest11}
      \left|   \int_B (  q_1-  q_2)  \e^{i(n-m)\theta}  \, z_n(r,\lambda)\, z_m(r,\lambda)\,  r\dd r \, \dd\theta  \right|  \\
      \leq \|\DN_{  q_1}(\lambda)-\DN_{  q_2}(\lambda)\|
      + \frac {  C_{ q_1,  q_2}} {\lambda^{4 }}  \|   q \|_{L^\infty}  ,
\end{multline}
which leads in light of \eqref{hankel5} to
\begin{multline*}
      \Big|  \int_B (  q_1-  q_2)  \e^{i(n-m)\theta}  \e^{-(n+m)(1-r)} \, r \dd r \, \dd\theta   \Big| \\
     \leq C(\lambda_0,K) \Big(  \lambda^2 \, \|\DN_{  q_1}(\lambda)-\DN_{  q_2}(\lambda)\| + \frac { C_{ q_1,  q_2}} {\lambda^{2}}  \|   q \|_{L^\infty}\Big),
\end{multline*}
for all~$n, m \geq K \lambda$.
In conclusion we have for any $k\in \Z$, any~$\ell \in \N$ and any~$j \geq 2 K\lambda> 2K \lambda_0$,
\begin{multline}
\label{diskest2}
      \left|  \int_B   (  q_1-  q_2)  \e^{-i\,k\theta}  \e^{-j(1-r)} \, r \dd r \, \dd\theta   \right| \\
       \leq C(\lambda_0,K)  \Big( \lambda^2 \, \|\DN_{  q_1}(\lambda)-\DN_{  q_2}(\lambda)\| + \frac { C_{ q_1,  q_2}} {\lambda^{2}}  \|   q \|_{L^\infty}\Big).
\end{multline}
The conclusion follows from \eqref{form}.
 Lemma~\ref{lemLc_k} is proved. \qed %
%

\section{Concluding remarks}
In this paper we have shown that the low-frequency component of
the potential can be determined in a stable way from the
scattering measurements and justified the resolution limit. We
have also proved that in the near-field we have in the monotone
case infinite resolution in reconstructing the potential near the
boundary. We think that the result holds in the general case.
However, its proof seems to be out of reach. In fact, even though
a sampling (or interpolation) formula for the Laplace transform
does exist \cite{sampling1,sampling2}, making norm-estimates
similar to those in Theorem~\ref{disk} is very challenging. Our
results can be extended in many directions. It would be very
interesting to study the limited-view case and show, as in
\cite{AGS2}, that we recover infinite resolution from near-field
measurements on the overlap of the source and receiver apertures.
Another challenging problem is to understand how probe interaction
can improve local resolution by converting evanescent modes of the
potential to propagating ones \cite{schotland}. These problems
will be the subject of forthcoming works.

\appendix

\section{Bessel functions}
\label{App:Special}
Bessel's equation arises when finding separable solutions to the Helmholtz equation in spherical coordinates,
and writes as follows:
\begin{equation}
\label{besseleq}
\left( \partial_z^2 + \frac 1 z \partial_z + \Big(1- \frac{n^2}{z^2}\Big)\right)u = 0, \quad n \in \Z.
\end{equation}
It is well known (see for instance \cite{Le, O} and the references therein) that one of the solutions
of Bessel's equation is the entire function $J_n (z)$ known as the Bessel function of the first kind of order $n$,
and defined for arbitrary~$z \in \C$ by the convergent series
$$ J_n (z) = \sum^{k=\infty}_{k=0} \frac{(-1)^k\, }{k!\,(n+k)!}\,(z/2)^{n+2k},$$
in the case where $n \in \N $ and by
$J_{- n} (z) = (-1)^n \, J_{n} (z)$.\\
%
%

\noindent To find a general solution of Bessel's equation \eqref{besseleq}, we need a second solution of \eqref{besseleq} which is linearly independent of $J_n (z)$. For such a solution, we usually choose $Y_n (z)$  the Bessel function of the second kind which is entire in the complex plane cut along the segment $]-\infty, 0]$ and defined for $n \in \N $ by
\begin{eqnarray*} Y_n (z) &=& \frac{2}{\pi} J_n (z) \log \frac{z}{2} - \frac{1}{\pi}\sum^{k=n-1}_{k=0} \frac{ (n-k-1)!}{k!}\,(\frac{z}{2})^{2k-n} \\ &- & \frac{1}{\pi}\sum^{k=\infty}_{k=0} \frac{(-1)^k\, (z/2)^{n+2k}}{k!\,(n+k)!}[\psi(k+1)+ \psi(k+n+1)],\end{eqnarray*}
where $\displaystyle \psi(m+1) = - \gamma + 1 + \frac 1 2 + ... + \frac 1 m,$
$\gamma$ being the Euler constant. We also define~$ Y_{- n}(z) = (-1)^n \, Y_{n} (z).$
\\

\noindent Since $J_n $ and $Y_n $ are linearly independent, the general expression for
solutions of \eqref{besseleq} is a linear combination of Bessel functions of the first and second kinds, i.e,
$$ u (z)= A\, J_n (z) + B\, Y_n (z), $$
where $A$ and $B$ are constants. \\

\noindent Another  basis of solutions to the differential equation \eqref{besseleq}
is given by the  Bessel functions of the third kind or Hankel functions,
denoted by $H^{(1)}_n$ and~$H^{(2)}_n$. These functions are defined  by the formulas
\begin{equation}
\label{hankel1}
      H^{(1)}_n(z)= J_n (z) + i Y_n (z) \quad \mbox{and} \quad   H^{(2)}_n(z) = J_n (z) - i Y_n (z),
\end{equation}
 where $z$ is any point of the complex plane cut along the segment $]-\infty, 0]$. The motivation for introducing the Hankel functions is that the linear combination of $J_n (z)$ and $Y_n (z)$ have very simple asymptotic expansions for large~$|z|$: it is thus well-known that
\begin{eqnarray}
\label{hankel2}
      H^{(1)}_n(z)&=& \Big(\frac 2 {\pi z}\Big)^{\frac 1 2} \e^{i\, (z-\frac {n \pi} 2  - \frac \pi 4 )}\left(1+ \O\Big(\frac{1}{|z|}\Big)\right) \, \, \mbox{and} \\
  \label{hankel3} H^{(2)}_n(z) &= & \Big(\frac 2 {\pi z}\Big)^{\frac 1 2} \e^{- i\, (z-\frac {n \pi} 2  - \frac \pi 4 )}\left(1+ \O\Big(\frac{1}{|z|}\Big)\right)
\end{eqnarray}
as $|z| \to \infty$.

\noindent Furthermore, we have the following Debye  formulas whose proof can be found
for instance in \cite[Chapter 9.4]{O}  and~\cite[Chapter~9]{as}:
$$ J_n(n \, \sech \alpha ) \sim \frac {\e^{-n(\alpha - \tanh \alpha)}} {(2 \pi n \tanh \alpha)^\frac 1 2} \left(1 + (\frac 1 8 \coth \alpha - \frac 5 {24} (\coth \alpha)^3)\frac 1 n +...\right)$$
and particulary
\begin{equation}
\label{hankel4} \
\begin{aligned}
J_n(n \,\ \sech \alpha ) = \frac {\e^{-n(\alpha - \tanh \alpha)}} {(2 \pi n \tanh \alpha)^\frac 1 2} \left(1 + \O\Big(\frac{1}{n}\Big)\right),
\\
 J'_n(n \,\ \sech \alpha ) = \frac {({ \tanh \alpha})^\frac12\e^{-n(\alpha - \tanh \alpha)}} {(4 \pi n)^\frac 1 2} \left(1 + \O\Big(\frac{1}{n}\Big)\right)
\end{aligned}
\end{equation}
as $n \to \infty$, where $\sech z $ denotes the hyperbolic secant of $z$ defined by
$$ \sech z = \frac 1 {\cosh z }$$    with $\cosh z$  the hyperbolic cosine.\\

\noindent Debye's formula gives rise to the following asymptotic behavior for the function introduced in Section~\ref{nearboundary}:
$$
z_n (\lambda,r) = \frac{J_{|n|}(\lambda r)}{ \lambda (J'_{|n|}(\lambda) - i J_{|n|}(\lambda))}
$$
defined for~$n \in \Z$. Let us prove that for~$|n| \gg \lambda$ and $r \sim 1$,
\begin{equation}
\label{hankel5}
\begin{aligned}
\mbox{Re} \, z_n (\lambda,r) \sim \frac C\lambda \e^{- n(1-r)} \left(1 + \O\Big(\frac{1}{n}\Big)\right), \\
 \mbox{and} \quad \mbox{Im} \, z_n (\lambda,r) \sim  \frac C\lambda \e^{- n(1-r)}\left(1 + \O\Big(\frac{1}{n}\Big)\right).
 \end{aligned}
\end{equation}
Without loss of generality we may assume that~$n \in \N$, then defining
$$
 \cosh \alpha_1 = \frac {n}{\lambda r} \,\, \mbox{and} \,\,  \cosh \alpha_2 = \frac {n}{\lambda },
 $$
it is easy to see  that under the above assumptions ($\lambda \ll n$ and $r \sim 1$), we have   necessarily $ \cosh \alpha_i \gg 1$  for $i \in \{1,2\}$, hence $ \alpha_i \gg 1$. This  implies~$ \cosh \alpha_i \sim  \e^{\alpha_i}$,~$ \sinh \alpha_i \sim  \e^{\alpha_i}$,
and~$\tanh \alpha_i \sim 1$, which gives rise to
$$
\mbox{Re} \, z_n (\lambda,r) = \frac{J_{n} (n \, \sech \alpha_1) J'_{n} (n \, \sech \alpha_2)  }{
\lambda (J'^2_{n}(n \, \sech \alpha_2) + J^2_{n}(n \, \sech \alpha_2)  )
}
$$
and
$$
 \mbox{Im} \, z_n (\lambda,r) = \frac{J_{n} (n \,\sech \alpha_1) J_{n} (n \, \sech \alpha_2)  }{
\lambda (J'^2_{n}(n \, \sech \alpha_2) + J^2_{n}(n \, \sech \alpha_2)  )\,\cdot
}
$$
Finally, taking advantage of \eqref{hankel4} we get
 $$
\mbox{Re} \, z_n (\lambda,r)
 \sim \frac { \e^{-n (\alpha_1 -  \alpha_2)}}{\lambda} \sim \frac{\e^{-n \log (\frac {1}{ r})}}{\lambda} \cdotp $$
 The computation is identical for~$ \mbox{Im} \, z_n (\lambda,r) $, so using the fact that $r$ is near to $1$, we obtain the desired conclusion.
\end{document}